\documentclass[11pt,a4paper]{article}
\usepackage[margin=2.6cm]{geometry}
\usepackage{amsmath,amssymb,amsthm}
\usepackage{booktabs}
\usepackage[hidelinks]{hyperref}
\newtheorem{theorem}{Theorem}[section]
\newtheorem{proposition}[theorem]{Proposition}

\title{Certified computations on no-three-in-line problems:\\ exact values and witnesses in the cube,\\ and the Guy--Kelly count in the plane}
\author{Aleksei Kudriashov (Alex Komang)\\ \small Nusa Dua Studio, Nusa Dua, Bali, Indonesia\\ \small ORCID: 0009-0006-8885-5338}
\date{21 September 2026 --- version 1.0}
\begin{document}\maketitle
\begin{abstract}
We report certified computations on four no-three-in-line questions, two in the cube and two in the plane, by one method:
exact decision procedures inside symmetry strata, every witness re-verified by a second program sharing no code, every number
traced to a public journal.
(I) Let $a(n)$ be the largest number of points of $\{0,\dots,n-1\}^3$ with no three collinear (A399138). We determine
$a(1),\dots,a(6)=1,8,16,28,40,64$ with DRAT-certified unsatisfiability proofs, give certified lower bounds
$a(7)\ge73$, $a(8)\ge94$, $a(9)\ge116$, $a(10)\ge138$, $a(11)\ge164$, and prove $a(p)\ge p^2$ for every prime $p$. The optima
share a layer structure $2n^2-2n+4$ that provably fails at $n=5$ and $n=7$.
(II) For $b(n)$, the largest number with no four coplanar (A280537), nineteen public configurations give lower bounds for
$9\le n\le29$, four of which improve the known bounds and their monotone closure ($b(12)\ge31$, $b(21)\ge47$, $b(22)\ge49$,
$b(27)\ge56$); the cyclically invariant subspace has maximum $23$ at $n=9$ and $26$ at $n=10$; four kinds of symmetry are
incompatible with the problem; no upper bound beyond $3n$ is known.
(III) The Guy--Kelly first-moment heuristic for the plane is audited against exact counts (A000755 to $n=20$): its corrected
constant comes out in closed form, $\pi/\sqrt3$, the threshold crossing $n=493$ is reproduced, and its error is shown to depend
on the shape of the question rather than on $n$ alone, with an unbounded multiplier; whether the residual error is
$\Theta(n)$ or $\Theta(n\ln n)$, which decides whether the constant survives, cannot be told by counting, and we measure how far
from telling we are.
(IV) The direction spectrum of the $2n$-point solutions is measured, and a line model with no fitted
parameter reproduces its shape (fifteen constants within $12\%$, seven predicted blind) but not its scale.
Withdrawn claims are kept in the text; each part states what it does not establish.
\end{abstract}

\section{Introduction}\label{s:intro}
The no-three-in-line problem asks for the largest subset of the $n\times n$ grid with no three points collinear.  Two points per
row bound the answer by $2n$; Flammenkamp's database records $2n$-point solutions for sizes up to $n=57$ \cite{Flammenkamp}, and
the heuristic of Guy and Kelly \cite{GuyKelly} predicts that for large $n$ only about $1.814\,n$ points can be placed.  The problem
has two natural analogues in the cube $\{0,\dots,n-1\}^3$: forbid three collinear points, where P\'or and Wood \cite{PorWood}
proved that the maximum is $\Theta(n^2)$ but no exact values had been recorded; or forbid four coplanar points (OEIS A280537),
where the known terms stop at $n=8$ and only lower bounds are known beyond.  For the history and state of the planar problem we
refer to the survey \cite{Survey} and to Prellberg's constraint-programming computations \cite{Prellberg}; the prior art specific
to each part is stated inside it (Sections~\ref{s:prior3d}, \ref{s:constants} and \ref{s:prior2d}).

This article reports computations on four questions in this family, two in the cube and two in the plane, by one method.  Every
extremal value is obtained by an exact decision procedure --- a propositional encoding whose unsatisfiability is certified, or an
exhaustive traversal whose completeness is checked --- and every lower bound is a witness configuration re-verified by a second
program that shares no code with the search, after that program has been shown to reject a deliberately corrupted witness.
Searches for witnesses are run inside the strata of a symmetry group, where the exact maximum of a stratum is often within reach; a
stratum maximum is a lower bound on the true maximum and is reported as such.  Estimates are tested where exact values exist, and
their reach is measured before their extrapolations are believed.  Every number in the text traces to a journal file in the public
repository (Section~\ref{s:repro}), and the claims that earlier versions of the parts had to withdraw are kept in the text rather
than erased.

The results, part by part.

\emph{Part I (Section~\ref{s:cube3}).}  For $a(n)$, the largest number of points of $[n]^3$ with no three collinear:
$a(1),\dots,a(6)=1,8,16,28,40,64$ (the sequence was not in the OEIS when this work began and is now A399138); certified lower
bounds $a(7)\ge73$, $a(8)\ge94$, $a(9)\ge116$, $a(10)\ge138$, $a(11)\ge164$; the bound $a(p)\ge p^2$ for every prime $p$; and a
layer structure $2n^2-2n+4$ of the optima that holds for $n=2,3,4,6$ and provably fails at $n=5$ and $n=7$.

\emph{Part I$'$ (Section~\ref{s:cube4}).}  For $b(n)$, the largest number with no four coplanar: nineteen public configurations
for $9\le n\le29$, four of which improve the bounds known from a 2017 OEIS comment and a 2016 programming contest together with
their monotone closure ($b(12)\ge31$, $b(21)\ge47$, $b(22)\ge49$, $b(27)\ge56$); the exact maxima $23$ and $26$ of the cyclically
invariant subspace at $n=9$ and $n=10$; and the structure of the symmetry strata, including four kinds of symmetry that the problem
excludes outright.

\emph{Part II (Section~\ref{s:kelly}).}  The Guy--Kelly heuristic, a first-moment estimate, is audited on the quantity it
predicts --- the number of $2n$-point solutions, known exactly to $n=20$ --- rather than at its threshold.  Its corrected constant
$\pi/\sqrt3$ is derived in closed form from the asymptotics of the number of collinear triples; its error is measured as a function
of the shape of the question and found to be an unbounded multiplier; and the question whether that error is $\Theta(n)$ or
$\Theta(n\ln n)$ is shown to be undecidable by the kind of computation available.

\emph{Part II$'$ (Section~\ref{s:spectrum}).}  On the same database of solutions, the mean number of point pairs per direction is
measured, and a line model with one theorem and no fitted parameter reproduces the shape of the spectrum, including seven constants
predicted before they were measured, while its scale has to be imposed.

Section~\ref{s:across} joins the parts: a first-moment threshold in the cube that the witnesses of Part I$'$ crossed, and the
Guy--Kelly form of the estimate tested on the cube enumerations.  Section~\ref{s:repro} states the verification protocol common to
all parts and lists the public records; Section~\ref{s:disclosure} discloses how the work was produced.  The four parts were first
circulated as separate notes (their records are listed in Section~\ref{s:repro}); this article consolidates them into one account
with one notation, one section on verification and one disclosure, and supersedes them.

\section{Notation and public objects}\label{s:notation}
Throughout, $[n]=\{0,1,\dots,n-1\}$; the grids are $[n]^2$ and $[n]^3$.  A line is \emph{rich} in a grid if it contains at least
three of its lattice points, and a plane is \emph{rich} in $[n]^3$ if it contains at least four.  A line carrying at most two
lattice points forbids nothing, and a plane carrying at most three forbids nothing; hence
\begin{equation}\label{eq:reform}
S\subseteq[n]^3\ \text{has no three collinear points}\iff |S\cap L|\le 2 \text{ for every rich line } L,
\end{equation}
and likewise $S$ has no four coplanar points iff $|S\cap P|\le3$ for every rich plane $P$.  We call a set \emph{admissible} for
the prohibition at hand when it satisfies it.  Both conditions are propositional formulae on $n^3$ variables, one cardinality
constraint per rich line or plane; this reformulation is used throughout Parts I and I$'$.  In the no-four-coplanar problem three
collinear points are forbidden as well, since with any fourth point they are coplanar.

The three extremal quantities of this article are
\begin{align*}
a(n)&=\max\{|S|:\ S\subseteq[n]^3,\ \text{no three of } S \text{ collinear}\}\quad\text{(OEIS A399138)},\\
b(n)&=\max\{|S|:\ S\subseteq[n]^3,\ \text{no four of } S \text{ coplanar}\}\quad\text{(OEIS A280537)},
\end{align*}
and, in the plane, the maximum $2n$ itself: a \emph{solution} is a set of $2n$ points of $[n]^2$ with no three collinear, the
maximum possible since every row carries at most two points.  We write $s(n)$ for the number of solutions (OEIS A000755), $T(n)$
for the number of collinear triples of $[n]^2$, and $D_4$ for the dihedral group of the square (order $8$).  The symmetry group of
the cube has order $48$ and $33$ conjugacy classes of subgroups; for a subgroup $H$ the \emph{$H$-stratum} of a problem is the
family of $H$-invariant admissible sets, and its maximum is a lower bound for the maximum of the problem --- the exact maximum of
the stratum when the optimisation closes, a lower bound inside the stratum otherwise.  Lexicographic leader constraints for the cube
group, the strata of its subgroups and the $D_4$-classes of planar solutions are used in Parts I, I$'$ and II$'$ respectively.

Two elementary bounds.  Each of the $3n^2$ axis-parallel lines of $[n]^3$ carries at most two points of an admissible set and each
point lies on three of them, so $a(n)\le2n^2$; this is the trivial upper bound, attained at $n=2$.  And:
\begin{proposition}\label{p:3n}
$b(n)\le 3n$ for every $n$.
\end{proposition}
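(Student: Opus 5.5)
Partition $[n]^3$ into the $n$ parallel planes $z=c$ for $c\in[n]$, in the same way the trivial bound $a(n)\le 2n^2$ came from the axis-parallel lines. Then bound what one plane can hold. Each layer $\{(x,y,c):x,y\in[n]\}$ is a plane of the cube. For $n\ge2$ it contains $n^2\ge4$ lattice points, so it is rich. By the reformulation following \eqref{eq:reform}, an admissible set $S$ for the no-four-coplanar prohibition therefore satisfies $|S\cap P_c|\le 3$ for every layer $P_c$.

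Summing over the $n$ disjoint layers, which cover $[n]^3$, gives
\[
|S|=\sum_{c\in[n]}|S\cap P_c|\le 3n .
\]
Taking the maximum over admissible $S$ yields $b(n)\le 3n$.

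The only point needing care is the degenerate case $n=1$. There the layer has a single lattice point and is not rich, but trivially $b(1)\le |[1]^3|=1\le 3$. More generally, every plane's intersection with $S$ is at most $3$ whether or not the plane is rich, since a non-rich plane carries at most three lattice points. So the argument works uniformly without invoking richness at all.

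No step is a real obstacle. The "main" point is simply to choose a family of pairwise disjoint planes that covers the grid. The $n$ axis-parallel layers in one coordinate direction do this, and the bound follows at once. One could remark that using all three directions does not improve the constant: each point lies on three such planes, so summing over all $3n$ layers gives $3|S|\le 9n$, the same bound.
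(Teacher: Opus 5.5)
Your proof is correct and is the paper's argument: the paper likewise caps each of the $n$ parallel coordinate layers at three points and sums, using $x=\mathrm{const}$ where you use $z=\mathrm{const}$. Your remark that any plane meets an admissible set in at most three points, rich or not, is a small refinement the paper leaves implicit; it also takes care of $n=1$.
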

\begin{proof}
Each of the $n$ planes $x=\mathrm{const}$ carries at most three points of an admissible set.
\end{proof}
This is the only upper bound for $b(n)$ above $n=8$ that we can prove.

\emph{Public objects.}  The OEIS entries produced or extended by this work: A399138 (new: $a(1),\dots,a(6)$, approved
August 2026), A280537 (edit: $b(12)\ge31$ and $b(n)\le3n$, approved September 2026) and A000755 ($s(20)=941\,580$, approved
August 2026).  Programs, journals, witnesses, manifests and the verification protocol are public in the repository
\url{https://github.com/iwasborninbali/saturation}; the configurations of Part I are archived separately \cite{Witnesses}; the
data of Parts II and II$'$ are Flammenkamp's \cite{Flammenkamp}.  Section~\ref{s:repro} lists the records and the ancillary files.

\section{Part I: the cube, no three collinear}\label{s:cube3}

\subsection{Method: a certified decision procedure}\label{s:method3}
By \eqref{eq:reform}, the question ``is there an admissible set of size $M$ in $[n]^3$?'' is a Boolean satisfiability problem with
$n^3$ variables --- $125$ for $n=5$, $216$ for $n=6$ --- one at-most-two cardinality constraint per rich line,
and one global at-least-$M$ constraint.  The global counter is a totalizer truncated at $2n^2$; the truncation
is legitimate because each of the $n^2$ axis-parallel lines carries at most two points, so $a(n)\le 2n^2$.
We add the $47$ lexicographic leader constraints of the cube group, which preserve satisfiability because the
property in \eqref{eq:reform} is invariant under that group.

We adopted this route after measuring the alternative.  A prefix-split exhaustive search on $42$ cores closed
$5$ of $204$ prefixes in two hours at $n=5$, extrapolating to roughly $88$ core-days; the same question is
decided by a SAT solver in minutes.  Speed, however, is not the reason.  An exhaustive search establishes
completeness by \emph{its own} covering argument, which only its authors' code can check; an unsatisfiability
answer comes with a DRAT certificate that an independent, widely used checker verifies without trusting our
solver or our code at all.

\medskip\noindent\textbf{Three separate links.}  It is worth being explicit about what is and is not certified,
because the three claims below rest on different foundations.
\begin{enumerate}\itemsep2pt
\item \emph{The formula expresses the problem.}  Verified two independent ways: by comparing the set of
triples forbidden by the rich-line constraints against the set with vanishing cross product, and by forcing each
collinear triple as an assumption and requiring unsatisfiability.  At $n=4$ and $n=5$ there are $376$ and $1858$
collinear triples; none was left unforbidden and none was forbidden spuriously.  A DRAT checker cannot verify
this link --- it knows nothing of cubes and lines --- and this is exactly where an encoding bug would hide.
\item \emph{The formula is unsatisfiable.}  Verified by \texttt{drat-trim} \cite{DratTrim} on the certificate emitted by
\texttt{kissat} \cite{Kissat}.
\item \emph{A witness is admissible.}  Verified by exhaustive integer arithmetic over all $\binom{|S|}{3}$
triples, by a program sharing no code with the search.
\end{enumerate}
Links 1 and 2 give the upper bound; links 1 and 3 give the lower bound.

\medskip\noindent\textbf{Removing link 1 for small $n$.}  The two checks above verify that the encoding is
faithful.  A stronger thing is possible: to compute the same values without an encoding at all, so that link 1
does not need to be trusted because it is not used.  We wrote a plain exhaustive traversal that places points
in the cube and tests triples by integer cross product --- the literal definition --- with no propositional
formula, no solver and no certificate.  Two points kill the line through them, so every cell carries a counter
of dead lines covering it and a cell is available while that counter is zero; a branch is abandoned when the
points placed plus the cells still available cannot exceed the best found.  Run from a bar of zero, so that no
value can be fed in and echoed back:
\begin{center}
\begin{tabular}{rrr}
\toprule
$n$ & result & nodes \\
\midrule
1 & $1$ (exhausted) & $2$\\
2 & $8$ (exhausted) & $37$\\
3 & $16$ (exhausted) & $49\,812$\\
\bottomrule
\end{tabular}
\end{center}
These agree with the certified values and share nothing with the certified route but the definition of the
prohibition itself.  Our own full enumeration at $n=3$ visits all $3\,813\,884$ admissible subsets (the empty one
included) and finds exactly \emph{two} optima of size $16$ --- a chiral pair, each with stabiliser of order $24$
in the full cube group, both with layer profile $[6,4,6]$ along all three axes.  At $n=4$ a traversal from a bar
of $27$ \emph{exhausted} its tree ($9\,012\,859\,649$ nodes, $3180$ s), proving $a(4)=28$ with no encoding at
all --- a bar is harmless under exhaustion, since it prunes only branches unable to exceed it.  The count at $n=4$
is now ours as well: a $32$-shard traversal (sharded by first cell, run to completion on every shard,
$13\,215\,017\,061$ nodes in all) finds exactly $14$ optima of size $28$, in three classes under the full cube
group with stabiliser orders $8$, $8$, $24$ ($6+6+2=14$) --- in agreement with an independent external recount.
Every optimum at $n\le4$ therefore has the layer profile $[2n,(2n-2)^{n-2},2n]$: the empirical observation of
Section~\ref{s:layers} is a fact up to $n=4$.

\emph{Correction.}  An earlier version of this paragraph quoted two numbers received from the first solver's
message and never passed through the verification bench of this very part: ``$2\,220\,075$ subsets'' and ``the
count $10\,960$''.  Both are withdrawn, and both origins have since been reconstructed: $2\,220\,075$ is exactly
$\binom{27}{8}$ --- the enumeration of all $8$-subsets of $3^3$ that verifies the uniqueness of the $8$-point
optimum of the \emph{no-four-coplanar} problem (A280538 gives $1$ there) --- and $10\,960=48\cdot232-176$ is
the number of labelled optima of that same neighbouring problem at $n=4$, where A280538 gives $232$ classes.
Two numbers from the wrong problem, quoted into the section about removing trust; recorded, not smoothed.

We are explicit about how far this extends.  From a bar of zero the traversal reaches $28$ at $n=4$ without
exhausting in an hour ($8.85\cdot10^9$ nodes); from a bar of $27$ it exhausts ($9.01\cdot10^9$ nodes, $53$
minutes), so $a(4)=28$ is established twice over as well.  The honest division: for $n\le4$ both the value and
the bound are encoding-free in addition to certified; for $n\ge5$ the encoding is checked but used.

\medskip\noindent\textbf{Calibration.}  Before computing anything new we recomputed, as a calibration, the two
smallest nontrivial values $a(3)=16$ and $a(4)=28$ --- there is no prior published source for them that we could
find, so the calibration is against our own earlier runs, not against the literature --- each with unsatisfiability
at $M+1$ in under a second, independently on two machines.
For instances that resist a single solver call we split into independent pieces by fixing the contents of the
first columns; the split is complete because three points of a column are collinear, so subsets of size at most
two exhaust it.  The aggregator refuses a verdict unless every piece of the manifest is present and
unsatisfiable --- a missing piece and an unsatisfiable one must never look alike.

\subsection{Exact values}\label{s:exact3}
\begin{center}\begin{tabular}{rrrl}\toprule
$n$ & $a(n)$ & $a(n)/n^2$ & how established\\\midrule
1 & 1 & --- & trivial\\
2 & 8 & 2.00 & exhaustive; meets the trivial bound $2n^2$\\
3 & 16 & 1.78 & exhaustive, and certified\\
4 & 28 & 1.75 & exhaustive, and certified\\
5 & 40 & 1.60 & certified\\
6 & 64 & 1.78 & certified\\\bottomrule
\end{tabular}\end{center}
\noindent The value $a(6)=64$ appears to be new: when this work began, a search of the OEIS for $1,8,16,28,40$ and for
$8,16,28,40,64$ returned nothing, and the sequence itself was not in the database (it is now A399138).

For $n\le4$ the values were also produced by two independent exhaustive programs whose node counts differ by
a factor of about $72$ ($17/3855/4.21\cdot10^7$ against $17/20068/3.03\cdot10^9$ for $n=2,3,4$), so their
agreement is not an artefact of a shared design.

For $n=5$ and $n=6$ the decisive question --- does a set of $41$, respectively $65$, points exist? --- was
split into independent pieces by fixing the contents of the first two $z$-columns, and the heaviest pieces
were split once more on the third column.  Both splits are complete because three points of a column are
collinear, so subsets of size at most two exhaust it.  The final tally, reproduced in
\texttt{logs/no3\_3d/FINAL\_VERDICT.txt}:
\begin{center}\begin{tabular}{lrrr}\toprule
 & $n=5$, $M=41$ & $n=6$, $M=65$\\\midrule
top-level pieces & $256$ & $484$\\
closed directly & $221$ & $450$\\
closed through all their continuations & $35$ & $34$\\
continuations required per piece & $16$ & $22$\\
satisfiable pieces found & $0$ & $0$\\
pieces left undecided & $0$ & $0$\\\bottomrule
\end{tabular}\end{center}
The aggregator builds the expected set of pieces from the manifest rather than from the results, treats a
missing piece and an undecided one as failures, and distinguishes three outcomes rather than two: only an
explicit unsatisfiable answer closes a piece, a satisfiable one anywhere would destroy the claim, and a
killed or timed-out run counts as \emph{no information} --- it neither closes nor refutes.  Eleven such
uninformative lines occur in the $n=6$ record, from runs interrupted when work was moved between machines;
each of the pieces concerned is closed by an explicit unsatisfiable answer from another run.

The lower bounds are witnesses, checked by exhaustive integer arithmetic over all $\binom{|S|}{3}$ triples by
a program sharing no code with the search: $40$ points at $n=5$ ($9880$ triples) and $64$ points at $n=6$
($41664$ triples).  For $n=6$ two witnesses were obtained independently, by a SAT solver and by a constraint
solver in decision form; they turned out to be the same set, which the stabiliser explains --- it has order
$24$, so the configuration has only two distinct images under the $48$ symmetries of the cube.

As a further check that is not part of the proof, a constraint solver was given the $n=6$, $M=65$ question in
one piece for an hour and returned no answer.  That is not evidence of impossibility --- it is the solver
saying it does not know --- but it found no counterexample either, while at $M=64$ it produces a solution in
seconds.

\subsection{Structure of the optima, and an anomaly at \texorpdfstring{$n=5$}{n=5}}\label{s:layers}
Writing the layer profile of a set as the number of its points in each of the $n$ layers perpendicular to an
axis, every optimum we have found has the \emph{same} profile along all three axes:
\begin{center}\begin{tabular}{rrl}\toprule
$n$ & $a(n)$ & layer profile (identical for all three axes)\\\midrule
2 & 8  & $4,4$\\
3 & 16 & $6,4,6$\\
4 & 28 & $8,6,6,8$\\
5 & 40 & $8,8,8,8,8$\\
6 & 64 & $12,10,10,10,10,12$\\\bottomrule
\end{tabular}\end{center}
A layer is an $n\times n$ grid with no three collinear, so it holds at most $2n$ points.  For $n=2,3,4,6$ the two
outer layers attain that planar maximum and the inner ones hold $2n-2$, giving $2\cdot 2n+(n-2)(2n-2)=2n^2-2n+4$
--- which reproduces $8,16,28$ and $64$ exactly.

At $n=5$ the pattern fails, and it fails for a structural reason rather than by accident.  Prescribing the
profile as a hard constraint and deciding feasibility shows that
\begin{center}\begin{tabular}{ll}\toprule
profile at $n=5$ & verdict\\\midrule
$10,8,8,8,10$ (total $44$, the value the pattern predicts) & infeasible\\
$10,9,8,9,10$; $10,8,9,8,10$; $10,10,10,10,10$; $9,9,9,9,9$ & infeasible\\
$10,8,8,8,8$ (total $42$) and $10,7,8,7,10$ (total $42$) & infeasible\\
$\mathbf{10,6,8,6,10}$ \textbf{(total }$\mathbf{40}$\textbf{)} & \textbf{infeasible}\\
$8,8,8,8,8$ (total $40$) & feasible\\\midrule
$n=6$: $12,10,10,10,10,12$ (total $64$) & feasible\\
$n=7$: $14,12,12,12,12,12,14$ (total $88$) & infeasible\\\bottomrule
\end{tabular}\end{center}
A caveat on how these verdicts are to be read.  Prescribing the profile along all three axes at once is a
restriction, so an infeasibility obtained that way would exclude only symmetric configurations, not that number
of points.  Every infeasibility above was therefore re-derived with the profile prescribed along a
\emph{single} axis, which excludes every configuration with that layer distribution.  The satisfiable rows need
no such care: a solution is a solution.
The fourth line is the sharpest: at $n=5$ two outer layers cannot both attain the planar maximum $2n$ at
\emph{any} total, so the optimum is forced onto a uniform profile.  The same computation at $n=6$ finds the
analogous profile realisable, and at $n=7$ it is again impossible.  The pattern $[2n,(2n-2)^{n-2},2n]$ is thus
realisable at $n=2,3,4,6$ and impossible at $n=5,7$ --- the obstruction is not an artefact of one case, and it
is not an obstruction to the idea of maximal outer layers as such.

The obstruction can be priced exactly.  A layer is a planar no-three-in-line configuration and so carries at
most $2n$ points; requiring \emph{both} outer layers to attain that planar maximum, and maximising the total,
gives
\begin{center}\begin{tabular}{rrrl}\toprule
$n$ & both outer layers & maximum total & $a(n)$\\\midrule
4 & $8$ & $28$ & $28$ --- costs nothing\\
5 & $10$ & $\mathbf{39}$ & $\mathbf{40}$ --- \textbf{costs exactly one point}\\
6 & $12$ & $64$ & $64$ --- costs nothing\\\bottomrule
\end{tabular}\end{center}
So the anomaly is not that a fitted formula occasionally fails.  It is that two planar-optimal configurations
placed in the outer planes are compatible with the spatial optimum at $n=4$ and $n=6$ and incompatible at
$n=5$, and incompatible by a single point.

We do not have a proof of why.  A count of the planar configurations attaining $2n$ points suggests rigidity:
the $4\times4$ grid admits $11$ such configurations in $4$ classes under the square's symmetries, the $5\times5$
grid admits $32$ in only $5$ classes, and the $6\times6$ grid admits $50$ in $11$ classes.  We offer this as an
observation accompanied by its data, not as an argument.  One natural mechanism can be ruled out: the midpoint
of a segment joining the two outer layers is a lattice point when $n$ is odd, and must stay empty, but an exact
count shows the middle layer still retains up to $16$ free cells --- above its own planar ceiling of $10$ --- so
that constraint does not bind.

\subsection{Certified lower bounds for \texorpdfstring{$n=7,\dots,11$}{n = 7, ..., 11}}\label{s:lb3}
\begin{center}\begin{tabular}{rrrp{8.6cm}}\toprule
$n$ & $a(n)\ge$ & $/n^2$ & source of the witness\\\midrule
7 & 73 & 1.49 & fourteen inequivalent configurations known; the first was invariant under every permutation of the axes (order 6)\\
8 & 94 & 1.47 & two inequivalent configurations, each invariant under a subgroup of order 4 (an earlier version had $93$, order 6)\\
9 & 116 & 1.43 & two inequivalent configurations: stabilisers of order 4 and of order 12; the latter is optimal within its class\\
10 & 138 & 1.38 & full stabiliser of order 12; optimal within the order-12 class\\
11 & 164 & 1.36 & stabiliser of order 12; the bound inside the class is not closed\\\bottomrule
\end{tabular}\end{center}
The bounds for $n=8,\dots,11$ (2--3 September 2026) come from a sweep over all $33$ conjugacy
classes of subgroups $H$ of the symmetry group of the cube (order $48$): in each class the maximum of $|S|$
over $H$-invariant admissible sets is computed by exact optimisation (CP-SAT over the $H$-orbits of cells, with
``at most two chosen cells on every grid line'') under a time limit of $25$--$40$ minutes per class.  Where the
solver closes the bound the value is the exact maximum of the class (this happened for the order-$12$ class at
$n=9$ and $n=10$); elsewhere it is a lower bound inside the class.  Every configuration was verified by two
independent programs over all $\binom{|S|}{3}$ triples, its stabiliser computed under the $48$ symmetries, and
its local optimality certified: no point can be replaced (all are rigid, min $\kappa\ge2$ in the sense of the
disjoint-killers lemma of the companion repository \texttt{lemma-atelier}), and no exchange removing at most
four points ($73$, $94$, $116$) or three points ($138$, $164$) improves it.  The configurations, the per-class
results and the verifier are archived separately \cite{Witnesses}; the six witnesses for $n=8,\dots,11$ are
among the ancillary files of this article.

The rest of this subsection is the earlier account of $n=7,8$ by randomised search, kept as data.  Each configuration is re-verified by testing all $\binom{|S|}{3}$ triples with integer cross products.  These
are lower bounds only; we make no claim about how far they are from the truth, and we expect them to be weak,
since a randomised search is a poor instrument at this size.  The $n=7$ bound illustrates the point: it stood
at $71$ from randomised search and rose to $73$ within the hour once the search was restricted to
configurations invariant under a cyclic permutation of the axes.  Restricting to a subgroup collapses the cells
into orbits and shrinks the search by the order of the group; it is legitimate for lower bounds only, since a
configuration found is a configuration, while failure to find one among the symmetric ones says nothing about
the rest.  The size of the group has an optimum in the middle rather than at either end, and the optimum is a balance:
the group must be large enough to make the problem solvable and small enough that good configurations still
fall inside the symmetric stratum.  At $n=8$:
\begin{center}
\begin{tabular}{rl}\toprule
order of the group & best found\\\midrule
2 & 82, 84\\ 3 & 88\\ 4 & 88, 90\\ \textbf{6} & \textbf{92, 92, 93}\\ 8 & 80, 88\\ 12 & 88\\
24 & 88 (proved optimal within its stratum)\\ 48 & 80 (proved optimal within its stratum)\\\bottomrule
\end{tabular}
\end{center}
(The full sweep above, $40$ minutes per class, later gave $94$ in two classes of order $4$, so the
order-$6$ value $93$ of this table was not the maximum of its own neighbourhood of classes either.)
At $|G|=48$ only $20$ orbits remain, the solver settles the stratum in seconds, and the answer is $80$, because
genuinely good configurations do not have that much symmetry.  Note also that the order alone does not decide
it: two different groups of order $8$ gave $80$ and $88$.  We record this as data.  An earlier draft of this
part stated instead that a larger group simply gives a better result --- a pattern read off the first four
rows, which the remaining four contradict.  (An earlier draft also quoted
$72$ and $90$ here.  Those numbers came from a run whose witness was never saved, and when we came to attach
witnesses to them the journals we could reproduce gave $71$ and $89$.  We report the values we can exhibit.)
Two indications.  First, the layer structure of
the smaller optima suggests $2n^2-2n+4=88$ at $n=7$ --- and although we have shown that the corresponding
\emph{profile} is impossible, that leaves the value itself untouched.  Second, at $n=6$ the same randomised
search reached $64$, which the certified computation then confirmed to be optimal, so the instrument is not
hopeless either; we simply do not know which case $n=7$ resembles.

For the record we also measured what the abandoned route would have cost, though the measurement deserves a
caveat.  Knuth's random-probe estimator, calibrated against the exact tree at $n=4$, put an unaided exhaustive
proof at $n=6$ at some $4.6\cdot10^{22}$ nodes, and that figure is what sent us to a decision procedure.  We
later found the plain estimator unreliable on trees of this family --- on a sibling problem the mean of twenty
thousand probes was $89\%$ a single probe, with the median five orders of magnitude below the mean --- and
replaced it by a stratified version that agrees with the exact tree at $n=4$ to within $0.1\%$.  The
$4.6\cdot10^{22}$ was produced by the plain estimator and has not been recomputed with the stratified one, so
it should be read as an order of magnitude and nothing finer.  It says nothing about the difficulty of the
problem in any case, only about the difficulty of one way of attacking it.

\subsection{An elementary lower bound for every prime}\label{s:prime}
\begin{proposition}\label{p:alg}
For every prime $p$, $a(p)\ge p^2$.
\end{proposition}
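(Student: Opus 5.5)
The plan is to take the graph of an anisotropic quadratic form over $\mathbb{F}_p$ and lift it to $[p]^3$. Let $p$ be an odd prime and fix a quadratic non-residue $d$ modulo $p$. Set
\[
S=\{(x,y,z)\in[p]^3:\ z\equiv x^2-dy^2 \pmod p\}.
\]
For each $(x,y)\in[p]^2$ there is exactly one $z\in[p]$, so $|S|=p^2$. The case $p=2$ is handled separately. There $a(2)=8\ge4$ by the table of exact values, or more simply every line of $[2]^3$ carries at most two lattice points, so any four points work.

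\emph{Step 1: reduce the integer question to $\mathbb{F}_p$.} Suppose three distinct points $P_1,P_2,P_3\in[p]^3$ are collinear in $\mathbb{Z}^3$. Write $P_i=P_1+t_iv$, where $v$ is a primitive integer direction vector and $t_i\in\mathbb{Z}$. Because the coordinates lie in $\{0,\dots,p-1\}$, distinct points stay distinct modulo $p$. Because $v$ is primitive, it is nonzero modulo $p$. Hence the reductions are three distinct points on the affine line $\bar P_1+\mathbb{F}_p\bar v$ of $\mathbb{F}_p^3$. It therefore suffices to show that the surface $\bar S=\{z=x^2-dy^2\}\subseteq\mathbb{F}_p^3$ meets every affine line in at most two points.

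\emph{Step 2: intersect a line with the surface.} Parametrise a line as $(x_0+ta,\ y_0+tb,\ z_0+tc)$ with $(a,b,c)\ne0$. Substituting into $z=x^2-dy^2$ gives a polynomial in $t$ of degree at most two, whose leading coefficient is $a^2-db^2$.

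If $(a,b)\ne(0,0)$, then $a^2-db^2\ne0$, since otherwise $d=(a/b)^2$ when $b\ne0$, or $a=0$ when $b=0$. Both are excluded because $d$ is a non-residue. So the equation is a genuine quadratic in $t$ and has at most two roots.

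If $a=b=0$, the line is vertical, and the surface is a graph over $(x,y)$, so it meets the line in at most one point.

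In either case no three points of $\bar S$ are collinear. By Step 1, no three points of $S$ are collinear in $\mathbb{Z}^3$, which gives $a(p)\ge|S|=p^2$.

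The only delicate point is the choice of the form. The naive choice $z=x^2+y^2$ fails for $p\equiv1\pmod4$, because there $a^2+b^2=0$ has nonzero solutions and whole lines lie on the surface. Anisotropy is exactly what makes the leading coefficient nonzero. The reduction in Step 1 also needs care: it is precisely where the primality of $p$ enters, since we need $\mathbb{F}_p$ to be a field and $\bar v\ne0$.
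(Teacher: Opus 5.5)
Your proof is correct and is essentially the paper's. It uses the same set $\{(x,y,(x^2-dy^2)\bmod p)\}$ with $d$ a non-residue, the same anisotropy argument forcing a nonzero leading coefficient, and the same separate treatment of $p=2$. The only difference is packaging: you reduce the whole triple to an affine line of $\mathbb{F}_p^3$ via the primitive three-dimensional direction, which makes distinctness modulo $p$ and the vertical case automatic, whereas the paper projects to the $(x,y)$-plane and compares the quadratic congruence with the affine $z(t)$ along the projected line.
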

\begin{proof}
Let $Q(x,y)=x^2-dy^2$ with $d$ a quadratic non-residue modulo $p$, so that $Q(a,b)\equiv0$ only for
$a\equiv b\equiv0$ (an anisotropic binary form; one exists for every $p$).  Put
$S=\{(x,y,Q(x,y)\bmod p):0\le x,y<p\}$, of size $p^2$.  Suppose three of its points are collinear.  Their
projections to the $(x,y)$-plane are distinct (the third coordinate is a function of the first two), hence lie
on a line with primitive integer direction $(a,b)$, and the three points are $(x_0+t_ia,\,y_0+t_ib,\,\cdot\,)$
with distinct integers $t_i$, $|t_i|<p$.  Along this line the third coordinate satisfies
$z\equiv Q(x_0+ta,y_0+tb)=Q(a,b)\,t^2+\ell(t)\pmod p$ with $\ell$ affine, whereas collinearity in
$\mathbb R^3$ forces $z$ to be an affine function of $t$.  A quadratic congruence agreeing with an affine one
at three points distinct modulo $p$ has vanishing leading coefficient, so $Q(a,b)\equiv0$ and therefore
$(a,b)=(0,0)$, a contradiction.
\end{proof}
Neither $x^2+y^2$ (anisotropic iff $p\equiv3\bmod 4$) nor $x^2+xy+y^2$ (iff $p\equiv2\bmod3$) works for all
$p$; the choice $x^2-dy^2$ does.  Proposition~\ref{p:alg} is thereby the completion, to every odd prime, of
Lemma~4 of P\'or and Wood \cite{PorWood}, whose construction is the case $Q=x^2+y^2$ and works precisely for
$p\equiv3\pmod4$; we verified their dichotomy directly ($6$ and $286$ collinear triples at $p=5$ and $p=13$, none
at $p=7$, $11$).  For $p=2$ the statement is trivial: no line meets $\{0,1\}^3$ in three points, so
$a(2)=8>4=p^2$.  The bound of Proposition~\ref{p:alg} is not sharp: $a(5)\ge40>25$ and
$a(7)\ge73>49$.  As a guard against a slip in transcribing the construction rather than in the proof,
we also built $S$ explicitly for $p=3,5,7,11,13$ and tested \emph{every} one of its triples by integer cross
product --- $790\,244$ triples at $p=13$ --- finding none collinear.

\subsection{What is not established in Part I}\label{s:notest3}
The following are the limits of what is established in this part, stated so that a reader need not infer them.
\begin{itemize}\itemsep2pt
\item At $n=5$ the upper bound no longer rests on one solver: the same $256$-piece split was closed
independently by Glucose \cite{Glucose}, of the MiniSat lineage rather than kissat's, with every piece unsatisfiable and none
satisfiable, the last taking $2785$ seconds and $14.4$ million conflicts.  Two solvers of unrelated descent, two
independently written aggregators, two independent semantic checks and two independently verified witnesses now
stand behind $a(5)=40$.  Worth recording separately: the \emph{monolithic} instance defeated Glucose for
$10{,}000$ seconds while its pieces fell in minutes, so the case split is a property of the method rather than
of one solver.  \textbf{At $n=6$ this has not been done} --- there are $484$ pieces and they are heavier --- and
the reader should not carry $n=5$ over to it.
\item The upper bound for $n=6$ rests on one SAT solver.  The instances that a single call decides carry
DRAT certificates verified by \texttt{drat-trim}, which removes the dependence on the solver; the instances
that had to be split do not carry stored certificates, because the certificates run to tens of gigabytes.
They are regenerated deterministically on demand, and the case split, its completeness and its aggregation are
each verified separately.
\item We have no proof of the upper bounds independent of the propositional encoding.  Our own exhaustive
enumerator does not reach $n=5$: it closed $24$ of $4096$ prefixes at $15$--$33$ billion nodes each, with the
full traversal on the order of $10^{14}$ nodes.  It corroborates the lower bound only.
\item The verification of the symmetry pruning shows that our \emph{implementation} discards nothing; it is
not a proof of the underlying lex-leader argument, which is standard but which we did not reprove.
\item The stratum sweep of Section~\ref{s:lb3} proves nothing about $a(n)$: a class maximum is an upper bound for that
class only, and the classes are enriched in maxima but need not contain them --- at $n=6$ every class with a
non-trivial symmetry stays below $64$ except those attained by the certified optimum itself.
\item The layer-structure anomaly at $n=5$ and $n=7$ is a computation, not an explanation.  A count of the
planar configurations attaining $2n$ points suggests rigidity ($11$ configurations in $4$ classes for
$4\times4$; $32$ in only $5$ classes for $5\times5$; $50$ in $11$ classes for $6\times6$), and one natural
mechanism --- the lattice midpoint between two outer layers --- is ruled out by an exact count.  Beyond that we
do not know.
\end{itemize}

\section{Part I\texorpdfstring{$'$}{'}: the cube, no four coplanar}\label{s:cube4}

\subsection{Prior art, and a claim of earlier versions corrected}\label{s:prior3d}
Earlier versions of this part claimed that published data for $b(n)$ stop at $b(8)=20$.  That claim was false, and it is
corrected here rather than erased: two sources predate this work, and both sit in plain sight on the OEIS page this part cites.

\emph{The 2017 comment.}  The entry A280537 carries, since January 2017: ``Currently (January
2017) known lower bounds for the next terms are $a(9)\ge23$, $a(10)\ge26$, $a(11)\ge28$,
$a(12)\ge30$, $a(13)\ge32$, $a(14)\ge35$, $a(15)\ge36$, $a(16)\ge38$, $a(17)\ge42$''
\cite{OEISA280537} (the entry's $a$ is our $b$).  The same entry states that terms up to $b(6)$ were found by exhaustive
search and that ``$a(7)$ and $a(8)$ are based on extensive numerical evidence'' --- which also
corrects an error repeated in Section~\ref{s:notest4} by earlier versions.

\emph{The 2016 contest.}  Al Zimmermann's programming contest \emph{Non-Coplanar Points}
(March--June 2016) posed exactly this problem for the first twenty-five primes \cite{AZsPCs}.
The final report lists, for each size, the best score achieved outside the first 48 hours; the
score is the square of the number of points, so the per-size bests are recoverable exactly:
\begin{center}
\begin{tabular}{rr|rr|rr|rr|rr}
\toprule
$n$ & best & $n$ & best & $n$ & best & $n$ & best & $n$ & best\\
\midrule
 2 &   5 & 13 &  32 & 31 &  71 & 53 & 110 & 73 & 145\\
 3 &   8 & 17 &  42 & 37 &  81 & 59 & 121 & 79 & 157\\
 5 &  13 & 19 &  46 & 41 &  89 & 61 & 125 & 83 & 164\\
 7 &  18 & 23 &  54 & 43 &  92 & 67 & 137 & 89 & 173\\
11 &  28 & 29 &  66 & 47 & 100 & 71 & 142 & 97 & 188\\
\bottomrule
\end{tabular}

\smallskip
{\small (The full per-size decode from the final report, verified against the exact values $5,8,13,18$
at $n=2,3,5,7$.  Where this work overlaps: ties at $11$, $13$; below by $4$, $3$, $4$, $7$ at $17$, $19$,
$23$, $29$.)}
\end{center}
The contest reached ratio $2.67n$ at $n=3$ falling to $1.94n$ at $n=97$; whether that decline
reflects the problem or the budget is precisely the question Section~\ref{s:effort} argues
search cannot settle --- but the numbers themselves are prior art and stand.

\emph{The comparison for every row of this work.}
\begin{center}
\begin{tabular}{rrrl}
\toprule
$n$ & this work & prior (source) & verdict\\
\midrule
 9 & 23 & 23 (2017) & tie, witness now public\\
10 & 26 & 26 (2017) & tie, witness now public\\
11 & 28 & 28 (2017; 2016) & tie, witness now public\\
12 & 31 & 30 (2017) & \textbf{improved by 1}\\
13 & 32 & 32 (2017; 2016) & tie, witness now public\\
14 & 34 & 35 (2017) & \emph{below prior art}\\
15 & 35 & 36 (2017) & \emph{below prior art}\\
16 & 38 & 38 (2017) & tie, witness now public\\
17 & 38 & 42 (2017; 2016) & \emph{below prior art by 4}\\
18 & 41 & 42 (monotone from $b(17)$) & \emph{below the monotone consequence}\\
19 & 43 & 46 (2016) & \emph{below prior art}\\
20 & 45 & 46 (monotone from $b(19)$) & \emph{below the monotone consequence}\\
21 & 47 & 46 (monotone from $b(19)$) & \textbf{improved by 1}\\
22 & 49 & 46 (monotone from $b(19)$) & \textbf{improved by 3}\\
23 & 50 & 54 (2016) & \emph{below prior art}\\
24 & 52 & 54 (monotone from $b(23)$) & \emph{below the monotone consequence}\\
25 & 53 & 54 (monotone from $b(23)$) & \emph{below the monotone consequence}\\
27 & 56 & 54 (monotone from $b(23)$) & \textbf{improved by 2}\\
29 & 59 & 66 (2016) & \emph{below prior art by 7}\\
\bottomrule
\end{tabular}
\end{center}
Values marked ``monotone'' are the closure of the published bounds under $b(n+1)\ge b(n)$
(an $n$-cube configuration is also an $(n+1)$-cube configuration).  An earlier version of this table
compared against the published rows only and marked the composite sizes ``apparently first
published''; the monotone comparison is due to Hugo Pfoertner.  The 2017 and 2016 numbers come without accessible configurations --- the contest server verified
scores, but the point sets themselves were not published; every number of ours comes with a public
configuration, verified twice by programs sharing no code.  \emph{Certified public witnesses} against
\emph{score-only records} is the honest division of value, and it is why the rows below prior art stay
in the table: they carry the only accessible configurations at their sizes that we know of.

One more thing the comparison measures.  Our search above $n=16$ was confined to the cyclically
invariant subspace, and the gap to the 2016 records is $4$, $3$, $4$, $7$ exactly on the primes where
they exist: an external measurement of what that confinement costs at growing $n$, consistent with
Section~\ref{s:subspace} --- enrichment is not containment, and here its price has a number.

How this was missed is worth recording, because the mechanism is general: the OEIS DATA line
does stop at $b(8)=20$, and we read the line and not the page.  The comment with nine further
bounds sat directly under it, and the contest is linked from the same entry.  An external
review pointed both out.

\subsection{The bounds}\label{s:bounds4}
\begin{center}
\begin{tabular}{rrrrr}
\toprule
$n$ & lower bound & $3n$ & gap & quadruples checked \\
\midrule
9 & 23 & 27 & 4 & 8\,855 \\
10 & 26 & 30 & 4 & 14\,950 \\
11 & 28 & 33 & 5 & 20\,475 \\
12 & 31 & 36 & 5 & 31\,465 \\
13 & 32 & 39 & 7 & 35\,960 \\
14 & 34 & 42 & 8 & 46\,376 \\
15 & 35 & 45 & 10 & 52\,360 \\
16 & 38 & 48 & 10 & 73\,815 \\
17 & 38 & 51 & 13 & 73\,815 \\
18 & 41 & 54 & 13 & 101\,270 \\
19 & 43 & 57 & 14 & 123\,410 \\
20 & 45 & 60 & 15 & 148\,995 \\
21 & 47 & 63 & 16 & 178\,365 \\
22 & 49 & 66 & 17 & 211\,876 \\
23 & 50 & 69 & 19 & 230\,300 \\
24 & 52 & 72 & 20 & 270\,725 \\
25 & 53 & 75 & 22 & 292\,825 \\
26 & 53 & 78 & 25 & 292\,825 \\
27 & 56 & 81 & 25 & 367\,290 \\
28 & 56 & 84 & 28 & 367\,290 \\
29 & 59 & 87 & 28 & 455\,126 \\
\bottomrule
\end{tabular}
\end{center}
Twenty-one rows, but only nineteen configurations.  The rows
for $n=26$ and $n=28$ carry no configuration of their own: they follow from $n=25$ and $n=27$ by
monotonicity alone, and are printed only to keep the table contiguous.  The row for $n=17$ does
have its own configuration, but no longer carries its own information, since $b(17)\ge b(16)\ge38$ follows by monotonicity.  This has happened three
times as the neighbouring row was improved, and it is the reason to read the table as a set of
bounds rather than of values --- a row is worth something only while it exceeds its left neighbour, and
against prior art and its monotone closure four rows do (Section~\ref{s:prior3d}).

The configurations themselves are in the repository (Section~\ref{s:repro}).  The gap to
Proposition~\ref{p:3n} grows with $n$; whether this reflects the problem or the reach of the
search we cannot say, and Section~\ref{s:effort} explains why we believe the question is not
decidable by search.

\subsection{The cyclically invariant subspace}\label{s:subspace}
All configurations above were found by searching the subspace invariant under a cyclic
permutation of coordinates.  That subspace is enormously enriched in maxima, and enriched more
as $n$ grows.  In the two-dimensional analogue, where we hold complete material --- every maximum
configuration for $n\le11$, reproduced against A000755 --- the proportion of maxima carrying a
symmetry of the square, divided by the same proportion in a null model matched on row and column
counts, runs
\[\times8,\ \times56,\ \times125,\ \times769,\ \times3729,\ \times73\,000\quad (n=6,\dots,11),\]
the last figure resting on seven events in four million and therefore stated to one significant
figure only.  The raw proportion \emph{falls} across the same range, from $36\%$ to $13\%$; the
ratio to the null model rises.

Enrichment is not containment.  At $n=6$ in three dimensions the cyclically invariant subspace
contains no maximum at all: its exhausted maximum is $15$ while $b(6)=16$.  Exhaustive traversal
of that subspace gives exactly $23$ at $n=9$ and exactly $26$ at $n=10$, each exhaustion carried out by two
implementations written without sight of each other's code (Section~\ref{s:repro}); these two statements bound the
subspace and say nothing about $b(9)$ and $b(10)$ beyond the witnesses.

One consequence deserves emphasis.  A sample drawn from a region enriched by three or four orders
of magnitude cannot support statements about the structure of solutions in general.  Structural
claims in this part are made only where complete material is available.

\subsection{Symmetry strata}\label{s:strata4}
A second search route --- the stratum sweep of Section~\ref{s:lb3}, with the plane constraints added --- gives three structural
facts and changes no bound in the table.

\emph{Route.}  Take every conjugacy class of subgroups $H$
of the symmetry group of the cube (order $48$; $33$ classes), and in each class maximise $|S|$
over $H$-invariant sets by exact optimisation: a CP-SAT model over the $H$-orbits of cells with the
constraints ``at most two points on every grid line'' and ``at most three on every lattice plane
carrying at least four cells'', the plane list computed exactly ($12\,453$, $73\,056$, $269\,895$,
$965\,826$, $2\,688\,585$, $7\,241\,856$ planes for $n=5,\dots,10$).  A lazy version of the model,
which adds planes only when a solution violates them, does not converge (at $n=6$ it stalls at $12$
after two hundred rounds) --- the full list is necessary.

Four kinds of symmetry are incompatible with the problem outright.  If $H$ contains the central
inversion or a rotatory reflection of order $6$, an $H$-invariant admissible set has at most $3$
points (two antipodal pairs are coplanar); if $H$ contains a reflection, at most $5$ (the segments
$x$--$\sigma x$ are parallel, so any two such pairs are coplanar, and the mirror carries at most
three points); if $H$ contains a rotation of order $4$, the set lies on the axis.  Only nine of the
$33$ classes remain: the trivial group, two classes of half-turns, the $3$-fold rotation, two
Klein four-groups, the cyclic group of a rotatory reflection of order $4$, the dihedral group
of order $6$ and the tetrahedral rotation group.  (The elementary proofs, with machine checks
and a Lean formalisation of the inversion case, are in the companion repository
\texttt{lemma-atelier}, lemma 003.)

\emph{Calibration.}  Within the strata the known values are reached at $n=5,7,8$ ($13$, $18$,
$20$) by the classes containing a $3$-fold rotation or a half-turn; at $n=6$ the best symmetric
value is $15=b(6)-1$, in agreement with Section~\ref{s:subspace}.  At $n=9,10$ the $3$-fold
stratum reproduces $23$ and $26$ --- the subspace maxima already exhausted by the first route ---
with a one-hour budget, and nothing above them.  The trivial stratum (no symmetry imposed)
reached $17$ at $n=7$ and $19$ at $n=8$ within $900$ seconds and proves nothing.

\emph{Structural facts}, every configuration checked by the two verifiers of Section~\ref{s:repro} and its
class computed under the $48$ symmetries: (i) at $n=7$ there are at least three pairwise
inequivalent $18$-point configurations, with stabilisers of orders $1$, $2$ and $3$; at $n=8$
three inequivalent $20$-point configurations (orders $1$, $2$, $3$); at $n=9$ three inequivalent
$23$-point configurations (all of order $3$).  The order-$2$ classes lie outside the cyclically
invariant subspace of Section~\ref{s:subspace}.  (ii) All nine are rigid --- no point can be
replaced by another --- and no two are connected by an exchange of at most two points (at most
three points for $n=7$); under the best alignment the classes differ in at least $80\%$ of
their points ($16$ of $20$ for one pair at $n=8$, more for the others).  (iii) Every stratum at $n=5,\dots,8$ was solved to optimality or to the time limit;
the per-stratum table and the configurations are in the repository, and seven of the configurations (the classes of
orders $2$ and $3$ at $n=7,8$, two of order $3$ at $n=9$, and a $28$-point configuration of order $3$ at $n=11$) are among the
ancillary files of this article.

What this does not establish: the strata give lower bounds inside symmetry classes only, and the
strata at $n\ge11$ (best known $28$ and $31$) have not been exhausted.

\subsection{What is not established in Part I\texorpdfstring{$'$}{'}}\label{s:notest4}
No configuration proves an upper bound.  A witness certifies ``at least'' and can never certify
``at most''.  The values $b(5)=13$ and $b(6)=16$ rest on exhausted search trees; $b(7)=18$ and
$b(8)=20$ rest, per the OEIS entry, on ``extensive numerical evidence'' only --- an earlier
version of this part wrongly promoted them to exhausted; above $n=8$ we have
Proposition~\ref{p:3n} and nothing else.  In particular the numbers
in the table are not claimed to be maxima, and we expect several of them to be improved.  A first-moment
estimate of ours that these witnesses refuted is recorded in Section~\ref{s:refuted}.

\subsection{Effort, and why the growth rate is not measurable by search}\label{s:effort}
The ratio of certified bound to $n$ decreases over the range of the table.  We do not read this
as a property of the problem.  Giving each seed an equal time budget on one core (eight seeds, fifteen seconds of uniform random
greedy restarts each, the cell drawn uniformly among the alive) and recording how many seeds reach
the OEIS value, we measure $8/8$ at $n=5$ and $0/8$ at $n=6,7,8$ --- at $n=6$ none of roughly
$160\,000$ restarts per seed reaches $16$.  \emph{Correction:} versions of this part up to 2.9 quoted
$8/8$, $3/8$, $2/8$, $1/8$ here; those four fractions were measured on the \emph{planar}
no-three-in-line problem --- they are the ones reported in Section~\ref{s:reach} --- and migrated into the cube account without a cube re-measurement.  The corrected
cliff is steeper, which strengthens the point but does not excuse the migration.
Equal time is therefore not equal effort, and a curve obtained under a fixed budget records the
budget.  Deciding whether $b(n)/n$ tends to $3$ or to something smaller would need a budget
growing exponentially in $n$.

A second contamination is worth naming.  In a search whose target doubles as its stopping
condition, asking for $K$ and receiving $K$ is not evidence about $K$.  Several values in an
earlier draft of this table were produced that way, agreed with a prediction, and were withdrawn
when the target was removed and the same program returned more.

\section{Part II: the plane --- the Guy--Kelly heuristic against exact counts}\label{s:kelly}

\subsection{The heuristic, and which constant is which}\label{s:constants}
The number $T(n)$ of collinear triples in the $n\times n$ grid has a closed form,
\[T(n)=\sum_{\gcd(a,b)=1}\ \sum_{s\ge2}(s-1)(n-s|a|)^{+}(n-s|b|)^{+},\]
which we verified against brute force on eighteen values.  The heuristic estimates the number of
admissible $m$-subsets by
\begin{equation}\label{eq:h}
\binom{N}{m}\exp\!\Big(-T(n)\binom{m}{3}\Big/\binom{N}{3}\Big),\qquad N=n^2,
\end{equation}
and locates the threshold where this drops below one.  Guy and Kelly's original constant was
$(2\pi^2/3)^{1/3}=1.8739$; Gabor Ellmann found an error in the argument in March 2004, and the
corrected value, recorded by Guy in OEIS A000769 that October and documented by Voutier
\cite{Voutier}, is
\[c=\pi/\sqrt3=1.813799.\]
Our own derivation of the threshold reproduces the corrected value, not the retracted one.  As a
check of the machinery we located the crossing exactly: the threshold from \eqref{eq:h} first falls
below $2n$ at $n=493$ (at $n=492$ it equals $984=2n$; at $n=493$ it is $985<986$), which agrees to the
integer with the value reported by Prellberg~\cite{Prellberg} and quoted in the recent
survey~\cite{Survey}: ``the heuristic probabilistic argument only applies for $n\ge493$''.
Our computation is independent of his and reproduces the same integer.

\subsection{The constant in closed form, and the shape of the slip}\label{s:closedform}
The corrected value can be obtained in closed form, and doing so shows exactly what the retracted
one is.  The input is the asymptotics of $T(n)$.  Measuring $T(n)/n^4$ at $n=2000,\dots,64000$, its
increment per doubling of $n$ is constant to six figures over five doublings ---
$0.210690$, $0.210692$, $0.210693$, $0.210692$, $0.210691$ --- against
\[(3/\pi^2)\ln 2=0.210691,\]
so that
\begin{equation}\label{eq:T}
T(n)=\tfrac{3}{\pi^2}\,n^4\big(\ln n-0.8373\big)+o(n^4\ln n).
\end{equation}
Put $m=\alpha n$.  Then $\ln\binom{n^2}{m}=\alpha n(\ln n+1-\ln\alpha)+O(n)$, while by \eqref{eq:T}
the exponent of \eqref{eq:h} is $T(n)(m/n^2)^3=\tfrac{3}{\pi^2}\alpha^3 n(\ln n-0.8373)+O(n)$.
Both sides are $\Theta(n\ln n)$; balancing the coefficients of $n\ln n$ gives
\[\alpha=\tfrac{3}{\pi^2}\alpha^3,\qquad\text{i.e.}\qquad \alpha^2=\pi^2/3,\qquad \alpha=\pi/\sqrt3 .\]
The balance is quadratic.  Guy's $(2\pi^2/3)^{1/3}$ is the root of $\alpha^3=2\pi^2/3$: a cubic
where a quadratic belongs.  That is the whole of the slip, and it is visible only once
\eqref{eq:T} is in hand, which is presumably why it stood for thirty-six years.

Numerically, the integer threshold $m^*(n)/n$ approaches the limit only logarithmically, so a
single large $n$ proves nothing.  Two implementations built independently --- one summing over
displacement vectors, one reducing the same sum to $O(n\log n)$ by the Dirichlet identity
$\gcd=\sum_{d\mid\gcd}\varphi(d)$ --- agree to $10^{-5}$ from $n=16000$ on:
\begin{center}
\begin{tabular}{rll}
\toprule
$n$ & first & second\\
\midrule
16000 & $1.93101$ & $1.93100$\\
32000 & $1.92307$ & $1.92306$\\
64000 & $1.91615$ & $1.91614$\\
\bottomrule
\end{tabular}
\end{center}
Fitting $m^*/n=L+A/\ln n$ gives $L=1.8126$ and $1.8118$; adding $B/(\ln n)^2$ gives $1.8154$ and
$1.8142$; a third correction term destabilises on seven points and is not used.  The two orders
bracket from below and from above in \emph{both} implementations, giving
\[L=1.8138\pm0.002,\]
which contains $\pi/\sqrt3=1.813799$ and excludes $(2\pi^2/3)^{1/3}=1.873856$ by thirty times the
uncertainty.  We state two decimals of confidence and no more: three decimals are tempting here and
are not supported once the third fitting term is seen to wander.

\subsection{Testing the count, not the threshold}\label{s:count}
A threshold inherits the whole error of the estimate without exhibiting its size.  The count does
not.  Exact counts of $2n$-point configurations are A000755, published to $n=19$; the value at
$n=20$ is on A.~Flammenkamp's table.  We obtain both independently by summing orbit sizes
$8/|\mathrm{stab}|$ over the $D_4$-classes of his solution database, which reproduces
$s(2),\dots,s(19)$ exactly and gives $s(20)=941580$ (since entered in A000755).  Against \eqref{eq:h}:
\begin{center}
\begin{tabular}{rrrr}
\toprule
$n$ & exact & estimate & $\log_{10}$(exact/estimate) \\
\midrule
 6 & $50$      & $10^{4.12}$  & $-2.421$\\
10 & $1135$    & $10^{7.11}$  & $-4.055$\\
14 & $10568$   & $10^{9.63}$  & $-5.604$\\
18 & $152210$  & $10^{11.78}$ & $-6.597$\\
20 & $941580$  & $10^{12.87}$ & $-6.896$\\
\bottomrule
\end{tabular}
\end{center}
The error reaches seven orders of magnitude.  Its increment per step, however, decays by a factor
of four across the range: $-0.86,\dots,-0.25,-0.13$.  That distinction --- between an error whose
increment is constant and one whose increment decays --- is what the rest of this part turns on.

\subsection{Refining the heuristic makes it worse}\label{s:refine}
Triples on one line are strongly dependent.  The natural repair is to treat \emph{lines} as
independent and use the exact hypergeometric probability that a line of $k$ grid points carries at
most two chosen points.  It is worse, and by a clean factor:
\begin{center}
\begin{tabular}{rrr}
\toprule
 & triples independent & lines independent \\
\midrule
$n=6$, $m=12$ & $-2.421$ & $-4.107$\\
$n=7$, $m=14$ & $-2.639$ & $-5.285$\\
\bottomrule
\end{tabular}
\end{center}
The mechanism is that \eqref{eq:h} carries \emph{two} errors of opposite sign.  Independence
\emph{between} lines overestimates, and the measured direction fixes the sign of the correlation:
at the ceiling $m=2n$ the two-per-row budget is rigid, so a set that has avoided one line finds
the next \emph{harder} to avoid --- the events are negatively correlated, and a product of
marginals overshoots.  (An earlier draft asserted the opposite sign while drawing the same
conclusion; the direction stated here is the one the numbers force.)  This decomposition is
measured at the ceiling only: away from it the net error changes sign (Section~\ref{s:ratio}), and we do not
claim the two-error account transfers there.  Counting the $\binom{k}{3}$ overlapping triples on a line as independent events
overestimates the probability of a violation, hence underestimates the survival probability.
Replacing the second by an exact computation removes the compensation and leaves the first bare.
At $n=7$ the cancellation is worth $10^{2.65}$.

The practical corollary is uncomfortable: any refinement that fixes one of the two errors will make
the heuristic worse.  Only fixing both helps, and the first requires the correlations between lines,
which is precisely what nobody knows how to compute.  This may be why the 1968 form has not been
superseded in fifty-eight years.

The same mechanism predicts where the heuristic should fail hardest, and we can test that
prediction on our own data.  In a symmetric class the points are rigidly coupled, so the first
error grows while the second does not, and the cancellation should break.  The coupling was
measured directly in Section~\ref{s:subspace}: among all maximum configurations (complete enumeration, $n\le11$), those
carrying a symmetry of the square are over-represented relative to a null model matched on row and
column counts by factors of $8$ to $3729$ at $n=6,\dots,10$.
Where the enrichment is three orders of magnitude, independence between constraints is not a
small approximation, and a heuristic built on it should not be trusted --- as reportedly it is
not, for symmetric classes.

\subsection{Which error is which class}\label{s:class}
Over sixteen steps the line-independent version has an essentially constant increment,
$-1.6$ per step, while the triple-independent version decays fourfold.  A constant increment means
an error growing exponentially in $n$, i.e.\ a wrong exponent; a decaying increment means a wrong
multiplier only.  So the refinement is not merely further from the truth on a given $n$: it is
wrong in a different and worse way.

We add what this does \emph{not} yield.  It is tempting to calibrate: subtract the measured error
from \eqref{eq:h} and re-solve for the $n$ at which $2n$-point configurations should cease to
exist.  We carried that out and then withdrew it.  Fitting the error on $n=12,\dots,20$ against
$\ln n$, $\sqrt n$, $n^{3/4}$, $n/\ln n$ and $n$ gives root-mean-square residuals of
$0.133,\,0.166,\,0.186,\,0.194,\,0.207$ --- nine points do not separate five one-parameter
families --- and calibrated crossings ranging over a factor of three.  Worse, the spread is a
property of the family we chose to fit, not of the data: $n/(\ln n)^2$ and $n^{0.9}$ are equally
admissible, equally $o(n\ln n)$, and give further values.  The honest statement is that the
finite-$n$ crossing is not determined by the data at all, and we give no number for it, because a
number given here would be quoted later as a measurement.

What the data do determine is the direction that matters.  Every form compatible with the nine
points is $o(n\ln n)$, and the terms of the heuristic are $\Theta(n\ln n)$.  A correction of lower
order does not move the balance of Section~\ref{s:closedform}, so the discrepancy --- seven orders of magnitude
though it is at $n=20$ --- leaves $\pi/\sqrt3$ where it is.

\subsection{At fixed ratio the multiplier is not bounded, and the growth form cannot be chosen}\label{s:ratio}
An earlier version of this part reported that at fixed ratio $r=m/2n=0.9$ the error of
\eqref{eq:h} stops growing with $n$, and read that as thirteen standard deviations of evidence
that the exponential rate of the heuristic is correct.  We withdraw the measurement, the
inference, and the significance, and we record why each failed, since the failures are of
general kinds.

\subsubsection*{The measurement: an instrument that did not report its own reach}
The estimator's \emph{hit rate} --- the fraction of random descents that reach depth $m$ at all
--- was not recorded.  It should have been.  At $r=0.9$ the two points carrying the claim were
obtained at hit rates of $0.08\%$ and $0.003\%$: one hundred and sixty successful descents at
$n=15$, and \emph{five} at $n=20$, per two hundred thousand attempts.  A quoted uncertainty of
$\pm0.144$ in $\log_{10}$ is not attainable from five hits by any number of seeds.  Re-running
that cell independently returns $1.15\cdot10^{20}$ against the earlier $2.03\cdot10^{19}$.
Neither number carries information.  We note that the values were never shown to be
\emph{wrong}; they were unsupported, which is a different and more common defect.

Against this we measured the estimator itself where exact counts exist, and it is sound:
at $n{=}10,m{=}18$ (hit rate $2.9\%$) ten seeds give $+2.2\%$; at $n{=}8,m{=}15$, $-0.5\%$; at
$n{=}8,m{=}16$ --- the ceiling, eighty-four hits --- $+1.8\%$.  There is no bias to speak of.
The defect is variance, not bias, and it is curable by descents rather than fatal.

\subsubsection*{The finding, once ratios with high hit rates are used}
Writing $E(n,r)=\ln(\text{estimate of \eqref{eq:h}})-\ln(\text{true count})$ in natural
logarithms, and holding $r$ at values where $2rn$ is an integer for every $n$ used --- the ratio
must not be rounded, since at $n=20$ the error moves by $36$ per unit of $r$, so rounding $m$
shifts it by $0.45$ ---
\begin{center}
\begin{tabular}{llll}
\toprule
$r$ & $E$ against $n$ & hit rate & anchors\\
\midrule
$0.50$ & $+0.09,\,+0.10,\,+0.14,\,+0.12,\,+0.08,\,-0.22,$ & $100\%$ & four exact\\
       & $-0.69,\,-1.28,\,-1.96,\,-2.71,\,-3.51$ & & \\
$0.70$ & $+0.58,\,+0.68,\,-0.09,\,-1.29,\,-2.80,\,-4.50$ & $99.6\%$ & one exact\\
$0.80$ & $+1.18,\,+1.71,\,+0.98,\,-0.33,\,-1.95,\,-4.58$ & $\ge5\%$ & one exact\\
\bottomrule
\end{tabular}
\end{center}
($n=5,6,7,8,10,15,20,25,30,35,40$ for the first row, continued on its second line; $n=5,10,15,20,25,30$ for the others.)  At each of
these ratios the error rises, turns, and then declines without levelling.  The multiplier is not
bounded: the heuristic understates the count by a factor growing at least exponentially in $n$,
faster at larger $r$.  The error is therefore a function of the \emph{shape} of the question, not of $n$ alone: at the single
$n$ where all three measurements exist ($n=20$) the heuristic overestimates by a factor $8\cdot10^{6}$ at the hard ceiling
$m=2n$, by a factor of about $80$ near the threshold ratio, and is off by less than $1.4$ at $m=1.6n$ (there an
underestimate), where two independent implementations agree to $0.18$ standard deviations.

The summit, however, moves right as $r$ grows --- near $n=7$ at $r=0.5$, near $n=10$ at $r=0.7$
and $r=0.8$ --- and at $r=0.9$ it has not been reached at all.  Five million descents at
$n=20$, $m=36$ produce fifty-four hits and an estimate of $6.59\cdot10^{18}\pm38\%$, giving
$E=+4.42\pm0.32$ against the exact $E=+4.08$ at $n=10$: still rising.  (The earlier value
$2.03\cdot10^{19}$ for this cell would have given $+3.30$; it was high by a factor of three, and
rested on five hits.)  So at the ratio that actually determines the constant --- the threshold
sits at $r=c/2=0.907$ --- \emph{we have never observed the decline}, only the ascent to a summit
we cannot reach.  Any statement about the behaviour of $E$ near the threshold is therefore an
extrapolation across a turning point that has not been located.

We record one further trap here, because we fell into it.  On the first four (exact) points of
the $r=0.5$ row the error is flat, and we announced that the decline was a near-threshold
phenomenon.  It is not: those four points lie before the summit.  Reading a pre-summit segment
as a plateau is precisely the error being corrected in this subsection, and four exact values
persuaded us of it more readily than eight estimated ones would have.  Exactness of values is
not length of series.

\subsubsection*{Why the growth form cannot be chosen}
If $E=\Theta(n)$ the constant of Section~\ref{s:closedform} is untouched, since the terms balanced there are
$\Theta(n\ln n)$.  If $E=\Theta(n\ln n)$ it moves.  The data do not decide, and cannot.

Fitting the full $r=0.5$ row to $E=c+an+bn\ln n$ returns $b$ at hundreds of standard deviations
from zero --- and a $\chi^2$ of $240$ on eight degrees of freedom, which rejects the model and
voids the uncertainty with it.  Fifteen further one-term variants using $\ln n$, $\sqrt n$ and
$n/\ln n$ are also rejected, the best by a factor of eighty-four.  The per-point error must be measured rather than assumed, and it does not scale: in the second
implementation, eight independent seeds give $0.262\%$ at $n=30$, $0.404\%$ at $n=35$ and
$0.979\%$ at $n=40$, so that the ratio of seed spread to batch spread runs $0.23$, $0.42$, $1.00$
--- no single measured value transfers to a neighbouring $n$.  Our own repeats give $0.55\%$ at
$n=30$ (two runs) and $2.98\%$ at $n=40$ (four seeds), against nominal figures of $1.3\%$ and
$1.8\%$: measured over nominal is $0.4$ in one place and $1.7$ in the other.  Below, points with
repeats carry the standard error of their mean and the rest carry the estimator's own figure,
with no scaling between them.

We attempted to go further and failed, and the failure is worth recording because it is the
error this part is otherwise about.  The whole column was computed twice by implementations
sharing no code, and comparing a single run of ours against the other implementation's value
suggested a disagreement of $5.5\%$ at $n=40$ where the stated error was $0.98\%$ --- from which
we concluded that the honest error bar is the cross-implementation one and cannot be had by
seeds.  A control refuted this.  Four seeds of our own implementation at $n=40$ give a spread of
$3.0\%$, and their mean brings the disagreement with the other implementation down to $1.2\%$.
The ratio is therefore $0.4$, not $5.5$: the cross-implementation difference is \emph{smaller}
than our own seed spread.  Both halves of the original figure were wrong --- the numerator was
one outlying seed, and the denominator was the other implementation's error scale substituted
for ours.

The a priori point survives: seeds cannot see what differs between implementations, since they
vary only the random stream within one traversal order.  But we have no measurement supporting
it, and we had presented it as measured.  Below, each point uses the mean over the seeds we ran
and their measured spread.

This matters beyond bookkeeping.  With a single noisy seed at $n=40$ and an error bar too small
by a factor of three, every one of the four forms below is rejected --- $\chi^2$ of $27.6$,
$19.3$, $24.8$, $13.1$ on two degrees of freedom --- and we would have reported that no form
fits.  Averaging four seeds and using their measured spread turns that false certainty back into
the correct verdict, which is that the forms cannot be told apart.

The rejection is confined to small $n$.  Taken on tails, three-parameter forms are admissible
from $n\ge15$ onward: $\chi^2=0.7/3$, $0.4/2$, $0.3/1$ for $c+an+bn\ln n$ at $n\ge15,20,25$.
The asymptotic regime appears to begin near $n=15$, and below it we were fitting asymptotic forms
to pre-asymptotic data.

On that tail the fit is emphatic: $b=-0.0892\pm0.0021$, $-0.0872\pm0.0047$, $-0.0837\pm0.0109$
across three nested tails, while the purely linear $c+an$ is rejected at $\chi^2=1884/4$, $350/3$
and $59/2$.  The significance of $b$ is $43$, $19$ and $8$ standard deviations on the
three tails --- itself a warning, since a robust quantity does not lose three quarters of its
significance when four points are dropped.  Read alone this says $E=\Theta(n\ln n)$ and the constant
moves.  It does not say that.  On the same tail,
\begin{center}
\begin{tabular}{lll}
\toprule
form ($n\ge20$) & $\chi^2$ (2 d.o.f.) & implication\\
\midrule
$c+an+b\,n\ln n$ & $0.44$ & constant moves\\
$c+an+d\sqrt n$ & $0.23$ & constant intact\\
$c+an+e\,n/\ln n$ & $0.34$ & constant intact\\
$c+an+f\ln n$ & $0.34$ & constant intact\\
\bottomrule
\end{tabular}
\end{center}
All four are admissible; three imply the opposite conclusion and fit better.  An independent
implementation, run without sight of these figures, obtains reduced $\chi^2$ of $8.1$, $7.0$,
$7.4$, $7.5$ for the same four forms --- different in level, identical in verdict: mutually
indistinguishable, oppositely answering.  The forty-three
standard deviations are a quantity computed \emph{inside} a model, and such a quantity does not
transfer to a choice \emph{between} models.  The test to apply is not whether a coefficient
differs from zero, but whether a second admissible model exists in which it is absent.  Here
several do.

Nor is this curable by computing further.  The two candidate bases are collinear to $0.9983$ over
$n\le40$, and the collinearity does not improve with range: $0.9986$ to $n=400$, $0.9993$ to
$n=10^4$.  What separates them is precision, not length, and the precision is already $0.26\%$.

We therefore withdraw the support that the earlier version of this subsection offered and do not replace it.  The
derivation of Section~\ref{s:closedform} stands, but it establishes what the first moment gives, not what the
problem gives, and the distance between those two is what this subsection failed to measure.

\subsection{Where the reach ends, and why}\label{s:reach}
The limit is not an artefact of the estimator.  A descent guided by importance sampling reaches
only the depths a greedy search reaches, and the quality of greedy search degrades with $n$: we
measured the fraction of random restarts attaining the known maximum as $8/8,\ 3/8,\ 2/8,\ 1/8$
at $n=5,\dots,8$, a geometric decay.  At $n=20$ greedy attains about $0.925$ of the maximum, so
the ratio $0.9$ is within reach; by $n=25$ it falls below $0.9$ and the required depth is outside
what such a descent visits at all.  Every top-down greedy method shares this bound.  To go past it
one needs a descent guided by something that knows the target --- which is to say, the upper bound
that the problem does not have.

\subsection{The objection this answers}\label{s:kaplan}
Kaplan's objection to the Guy--Kelly conjecture, as summarised in \cite{Kaplan}, is not that the
constant is wrong but that the derivation is unsafe: it ``suggests that certain random events would
be largely independent of each other when they're really not'', and ``if we look more closely at
how non-independent they actually are we might not reach the same conclusion''.  No alternative
constant is proposed.

The objection is exactly right about the derivation, and Sections~\ref{s:count} and \ref{s:refine} quantify how badly:
the independence assumption is off by seven orders of magnitude at $n=20$, and the crude form
survives only because two of its errors cancel.  We had believed Section~\ref{s:ratio} supplied the
measurement the objection asks for.  It does not, and we now believe no measurement of this kind
can: the two possible answers correspond to bases that stay collinear at every range, and both
admit an acceptable fit to the data we can obtain.  To ``might we not reach the same conclusion''
the honest answer is that we cannot tell by counting, and that we have measured how far from
telling we are.

\section{Part II\texorpdfstring{$'$}{'}: the direction spectrum of the same solutions}\label{s:spectrum}

\subsection{Definitions and data}\label{s:spdata}
For $v=(a,b)$ with $\gcd(a,b)=1$ and $(a,b)\ne(0,0)$, a \emph{pair of direction $v$} in $S\subseteq[n]^2$ is an unordered pair
$\{p,q\}\subseteq S$ with $q-p=kv$ for some $k\ge1$ (so every pair has exactly one primitive direction up to sign).
For a solution, every line carries at most two points, hence the number of pairs of direction $v$ equals the number of
lines of direction $v$ carrying exactly two points.  Define
\[c_v(n)=\frac{1}{n}\,\mathbb{E}\bigl[\#\{\text{pairs of direction }v\}\bigr],\]
the expectation over the uniform distribution on the solutions of size $n$ in the sample at hand, averaged over the
class $\{(a,b),(b,a),(a,-b),(b,-a)\}$ of $v$ under the symmetries of the square.  Since every row and every column of a
solution carries exactly two points, $c_{(1,0)}=c_{(0,1)}=1$ identically, and the total over all non-axial classes
is fixed:
\begin{equation}\label{eq:total}
\sum_{v\ \text{non-axial}} n\,c_v(n)\cdot|\text{class}(v)| = \binom{2n}{2}-2n = 2n^2-3n .
\end{equation}

\emph{Data.}  We use Achim Flammenkamp's database of all known solutions (state of 31 August 2026).  It is complete
for $n\le20$ (for $n=19$: $32\,577$ solutions; $n=20$: $118\,057$) and contains only solutions with a non-trivial
symmetry for $21\le n\le57$.  The measurements were made by a second agent with code written independently of the
model below; the values quoted as ``mixed $n$'' average over all $391\,812$ solutions with weight one.  Ensemble
measurements on this database have been published since 1997 (Flammenkamp's frequency maps of cell occupation
\cite{Density}, continued by Prellberg in 2026 for $n=56,57$); the present part continues that genre with a
different observable.

\begin{center}\begin{tabular}{lrrrr}\toprule
class & mixed $n$ & $n=19$--$20$ & $n=29$--$31$ & $n=32$--$57$\\\midrule
$(1,1)$ & 0.7306 & 0.7308 & 0.7327 & 0.7325\\
$(1,2)$ & 0.5633 & 0.5638 & 0.5635 & 0.5548\\
$(1,3)$ & 0.4545 & 0.4545 & 0.4586 & 0.4526\\
$(2,3)$ & 0.4095 & 0.4097 & 0.4121 & 0.4070\\
$(1,4)$ & 0.3744 & 0.3741 & 0.3807 & 0.3793\\
$(1,5)$ & 0.3073 & 0.3074 & 0.3228 & 0.3261\\
$(2,5)$ & 0.2846 & 0.2842 & 0.3023 & 0.3049\\
$(3,4)$ & 0.3043 & 0.3039 & 0.3175 & 0.3186\\
$(1,6)$ & 0.2618 & 0.2492 & 0.2687 & 0.2800\\
$(1,7)$ & 0.2233 & 0.2071 & 0.2340 & 0.2461\\
$(2,7)$ & 0.2081 & 0.1889 & 0.2208 & 0.2335\\
$(3,5)$ & 0.2701 & 0.2609 & 0.2758 & 0.2819\\
$(4,5)$ & 0.2390 & 0.2243 & 0.2486 & 0.2575\\
$(1,8)$ & 0.1859 & 0.1692 & 0.1954 & 0.2160\\
$(3,7)$ & 0.1961 & 0.1791 & 0.2069 & 0.2209\\
\bottomrule
\end{tabular}\end{center}
Short directions are constant in $n$ to $1\%$; long directions grow with $n$ (by $23\%$ for $(3,7)$ and $28\%$ for
$(1,8)$ between $n\approx20$ and $n\ge32$); the sizes $21\le n\le28$ ($54\,604$ solutions) are omitted from the table.  A ``constant of the database'' for a long direction is therefore an average over a mixture
of sizes, and the model must be compared with the data \emph{at the same $n$}.  Symmetry does not explain the growth:
at $n=19$ and $20$, where the database is complete, the spectrum of the solutions with a half-turn symmetry
($592$ and $675$ of them) differs from that of all solutions by at most $4\%$, and for the long directions in the
\emph{downward} direction ($c_{(1,7)}$: $0.1986$ against $0.2067$ at $n=19$).

\emph{The null model.}  Under the uniform distribution on all configurations with exactly two points in every row
and column (the correct null model: a solution is such a configuration), the expected number of pairs of direction
$v$ is the number of cell pairs of direction $v$ times the probability $2(2n-3)/(n(n-1)^2)$ that a given pair of cells
in distinct rows and columns is occupied (the group $S_n\times S_n$ acts transitively on such pairs).  At $n=20$ this
gives $c^{\rm null}_{(1,1)}=1.27$, $c^{\rm null}_{(1,2)}=0.75$, $\dots$, so the solutions are depleted in the short
directions ($0.58$, $0.75$, $0.89$, $0.94$ for $(1,1)$, $(1,2)$, $(1,3)$, $(2,3)$) and slightly enriched in the long ones
($1.01$--$1.11$).  This is the pattern to be explained.

\subsection{The line model}\label{s:model}
Fix a direction $v$.  The cells of $[n]^2$ are partitioned into the lines of direction $v$ (a cell without a
neighbour in direction $v$ is a line of length $1$); let the lines have lengths $L_1,\dots,L_r$.

\begin{theorem}\label{thm:gf}
Let $2\le m\le\sum_i\min(L_i,2)$ and let $S$ be uniformly distributed on the $m$-subsets of the cells that contain at
most two cells of every line.  Then the number of such subsets is $Z_m=[t^m]\prod_{i=1}^{r}\bigl(1+L_it+\binom{L_i}{2}t^2\bigr)$
and
\[\mathbb{E}\bigl[\#\{i:\ |S\cap\ell_i|=2\}\bigr]=\frac{1}{Z_m}\sum_{i=1}^{r}\binom{L_i}{2}\,[t^{m-2}]\prod_{j\ne i}\bigl(1+L_jt+\binom{L_j}{2}t^2\bigr).\]
\end{theorem}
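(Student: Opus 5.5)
The plan is a direct generating-function count, exploiting the fact that the lines $\ell_1,\dots,\ell_r$ of direction $v$ partition the cells. A subset $S$ with at most two cells on every line is the same thing as an independent choice, for each line $\ell_i$, of a subset $S\cap\ell_i$ of size $0$, $1$ or $2$. Line $i$ offers $1$, $L_i$ and $\binom{L_i}{2}$ choices of these sizes. Marking size by $t$, the line contributes the factor $1+L_it+\binom{L_i}{2}t^2$. Because the choices on different lines are independent and the sizes add, the admissible subsets of total size $m$ are counted by the coefficient of $t^m$ in the product. This gives $Z_m$. A line of length $1$ contributes $1+t$, since $\binom12=0$, and is handled uniformly by the same formula.

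The hypothesis $2\le m\le\sum_i\min(L_i,2)$ ensures that the uniform distribution is well defined. The maximum admissible size is exactly $\sum_i\min(L_i,2)$, so every $m$ in that range gives $Z_m\ge1$. The lower bound $m\ge2$ only ensures that $t^{m-2}$ is a genuine monomial. I would state this explicitly, since it is the only place where the hypothesis enters.

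For the expectation, I would write the number of full lines as a sum of indicators, $\sum_i\mathbf 1[|S\cap\ell_i|=2]$, and apply linearity. It then suffices to count the admissible $m$-subsets with $|S\cap\ell_i|=2$. Line $i$ then contributes exactly $\binom{L_i}{2}$ choices of size $2$. The remaining $m-2$ points are distributed over the other lines under the same at-most-two rule, again independently. Their count is therefore $[t^{m-2}]\prod_{j\ne i}(1+L_jt+\binom{L_j}{2}t^2)$. Dividing by $Z_m$ and summing over $i$ gives the formula.

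I do not expect a genuine obstacle; the argument is a routine product-of-generating-functions count. The one point requiring care is that the partition into lines must be exact, so that the at-most-two constraint factorises. This holds because every cell lies on exactly one line of direction $v$, counting singleton lines. I would also remark that pairs of direction $v$ in $S$ coincide with lines of direction $v$ carrying exactly two points of $S$ (Section~\ref{s:spdata}). The theorem is thus the model's prediction for $n\,c_v$, though this identification is not part of the proof.
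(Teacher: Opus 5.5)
Your proposal is correct and follows the paper's own proof. Both factorise the at-most-two constraint over the partition into lines, which gives $Z_m$ as the coefficient of $t^m$. Both then apply linearity of expectation, counting the admissible subsets with $|S\cap\ell_i|=2$ as $\binom{L_i}{2}$ times $[t^{m-2}]\prod_{j\ne i}\bigl(1+L_jt+\binom{L_j}{2}t^2\bigr)$. Your remarks that the hypothesis guarantees $Z_m\ge1$ and that singleton lines are covered by the same formula are harmless details the paper leaves implicit.
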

\begin{proof}
A subset is admissible for $v$ exactly when it takes $x_i\in\{0,1,2\}$ cells on line $i$; there are $\binom{L_i}{x_i}$
ways to choose them, independently across lines, so the admissible $m$-subsets are counted by the coefficient of $t^m$
in the product.  The expectation is $\sum_iP(x_i=2)$, and the admissible $m$-subsets with $x_i=2$ are counted by
$\binom{L_i}{2}$ times the coefficient of $t^{m-2}$ in the product over $j\ne i$.
\end{proof}

This is the canonical ensemble of hard particles in $r$ independent boxes of capacity two with degeneracies
$1,L_i,\binom{L_i}{2}$ (equivalently, a uniform placement conditioned on ``at most two per box''); the identity is
elementary.  We did not find an established name for it; the nearest umbrella term in the refereed literature is
\emph{restricted occupancy} (Freund 1956 \cite{Freund}; the unweighted equal-box case is an exercise in Comtet),
and the corresponding relaxation appears as ``capacity-two constraints'' in Prellberg's linear-programming bound for a
parity variant \cite{PrellbergCB}.  What is not elementary is that it describes the solutions of the no-three-in-line problem.

\emph{The model.}  Put $m=2n$ and let $E_v$ be the expectation of Theorem~\ref{thm:gf} for the lines of direction $v$.
The model does not know that the pairs of a solution are counted by \eqref{eq:total}: summed over all non-axial
primitive directions, $\sum_vE_v=615.3$ at $n=20$ against the true $740$, a shortfall of $17\%$.  We therefore
impose \eqref{eq:total} by one global factor, $\lambda_n=(2n^2-3n)/\sum_vE_v$ ($\lambda_{20}=1.203$, $\lambda_{30}=1.163$,
$\lambda_{40}=1.139$), and set $c^{\rm model}_v(n)=\lambda_nE_v/n$, averaged over the class of $v$.  The factor is not fitted to
the measured spectrum --- it is the same for every direction --- but it is not derived either.  Everything the model
predicts is therefore a statement about \emph{ratios} between directions.

\subsection{Results}\label{s:spresults}
\begin{center}\begin{tabular}{lrrrrrr}\toprule
class & model $20$ & data $19$--$20$ & ratio & model $30$ & data $29$--$31$ & ratio\\\midrule
$(1,1)$ & 0.7637 & 0.7308 & 1.045 & 0.7441 & 0.7327 & 1.016\\
$(1,2)$ & 0.5629 & 0.5638 & 0.998 & 0.5541 & 0.5635 & 0.983\\
$(1,3)$ & 0.4327 & 0.4545 & 0.952 & 0.4316 & 0.4586 & 0.941\\
$(2,3)$ & 0.3766 & 0.4097 & 0.919 & 0.3792 & 0.4121 & 0.920\\
$(1,4)$ & 0.3428 & 0.3741 & 0.916 & 0.3492 & 0.3807 & 0.917\\
$(1,5)$ & 0.2781 & 0.3074 & 0.905 & 0.2890 & 0.3228 & 0.895\\
$(2,5)$ & 0.2544 & 0.2842 & 0.895 & 0.2665 & 0.3023 & 0.882\\
$(3,4)$ & 0.2737 & 0.3039 & 0.901 & 0.2839 & 0.3175 & 0.894\\
$(1,6)$ & 0.2322 & 0.2492 & 0.932 & 0.2440 & 0.2687 & 0.908\\
$(1,7)$ & 0.1931 & 0.2071 & 0.932 & 0.2105 & 0.2340 & 0.900\\
$(2,7)$ & 0.1800 & 0.1889 & 0.953 & 0.1978 & 0.2208 & 0.896\\
$(3,5)$ & 0.2309 & 0.2609 & 0.885 & 0.2443 & 0.2758 & 0.886\\
$(4,5)$ & 0.2076 & 0.2243 & 0.926 & 0.2223 & 0.2486 & 0.894\\
$(1,8)$ & 0.1657 & 0.1692 & 0.979 & 0.1826 & 0.1954 & 0.934\\
$(3,7)$ & 0.1669 & 0.1791 & 0.932 & 0.1852 & 0.2069 & 0.895\\
\bottomrule\end{tabular}\end{center}
At $n=20$ the model's order of the fifteen classes coincides with the data's except for the adjacent pair $(1,6)$, $(3,5)$
(which differ by $3\%$ in the data); the model is below the data in every class but $(1,1)$, by $5$--$12\%$, and at
$n\approx30$ by $6$--$12\%$ in every class but $(1,1)$ and $(1,2)$.  The model thus reproduces the fifteen measured constants
within $12\%$ (model$/$data $0.89$--$1.05$ at $n=20$, $0.88$--$1.02$ at $n\approx30$) and their order at matched $n$ up to one
adjacent pair.

\emph{Blind test.}  The first eight classes were known to us when the model was built.  The remaining seven were
\emph{not}: their model values were written into a dated ledger (3 September 2026, 16:05 WITA) before a second agent,
who had not seen the model or the ledger, measured them on the database with independent code; all seven fell within
$2$--$10\%$ of the prediction (model$/$data at mixed $n$: $0.93$, $0.95$, $0.95$, $0.90$, $0.93$, $0.98$, $0.94$)
and in the predicted order up to pairs differing by less than $3\%$.  At matched $n=20$ the agreement for the long
directions improves (model$/$data $0.93$ for both $(1,7)$ and $(3,7)$ instead of $0.87$ and $0.85$ against the mixture).

\subsection{What fails, and what is not claimed}\label{s:spfails}
\emph{A first-order pair model fails.}  The natural first attempt --- a pair of direction $v$ survives with probability
$(1-\rho)^{L}$, $\rho$ the conditional occupancy of a further cell on its line and $L$ the number of such cells, then
renormalise --- over-depletes the short directions by a factor of two ($c_{(1,1)}=0.40$ against $0.73$) and puts $(1,2)$
above $(1,1)$.  The exclusions along a long line are far from independent; the line model succeeds because it treats
each line as a unit.

\emph{The scale is imposed.}  The model's total falls $17\%$ short of \eqref{eq:total}.  After renormalisation the
fifteen measured classes sum to $17.1$ (model) against $18.6$ (data) out of the fixed total $2n-3=37$ at $n=20$; the
model's deficit in the measured directions is exactly its excess in the tail of rare, steep directions, and no second
renormalisation can move it.  Of the shortfall, at most a third can be attributed to the row and column sums that the
model ignores (a free $2n$-subset has $705.7$ non-axial pairs in expectation, the fixed-marginal model $740$);
the rest is the model's treatment of one direction at a time, whereas a solution is constrained in all directions
simultaneously.

\emph{Drift in $n$.}  The model's values drift by $3$--$8\%$ between $n=20$ and $40$ (upward for long directions),
in the same direction as the data but not by the same amount; the constancy of the short directions in the data is
reproduced only approximately ($c^{\rm model}_{(1,1)}$: $0.764$, $0.744$, $0.731$ at $n=20,30,40$).  We found neither
a measurement nor an explanation of the growth of the long directions in the literature; the nearest theoretical
statement, Remark~1.7 of Ghosal et al.\ \cite{Ghosal}, predicts for $k=2$ an equal contribution of every dyadic scale
of the direction length --- that is, no growth.

\emph{One inversion.}  At $n=20$ the model orders $(1,6)$ above $(3,5)$ by $0.6\%$; the data order them the other way by
$4.7\%$ ($0.2492$ against $0.2609$) --- the model's largest error, $-11.5\%$, is on $(3,5)$.

\emph{Not claimed.}  No theorem about the ensemble of solutions is proved here; Theorem~\ref{thm:gf} is a statement
about a model.  No derivation of the scale, and no explanation of the growth of the long directions with $n$, is
offered.

\subsection{Prior art for the spectrum}\label{s:prior2d}
A multi-channel search (3 September 2026: direct retrieval of the primary sources; full-text search inside arXiv, zbMATH, OEIS,
Google Books and Open Library; API queries to Crossref, OpenAlex, Semantic Scholar, GitHub and the Wayback Machine)
found no published measurement of pairs per direction, or of diagonal occupation, on actual no-three-in-line solutions;
no $\kappa$-like constant; no per-direction decomposition of the Guy--Kelly count; and no model of the kind of
Section~\ref{s:model}.  The nearest objects, none of which is prior art for the results above, are: Guy and Kelly's expected number
of collinear triples of a random $2n$-subset \cite{GuyKelly}, a sum over primitive directions that is collapsed through
Euler's function and never split by direction; Flammenkamp's frequency maps of cell occupation on the same database
\cite{Density} and his ``superficial explanation'' of the shading cost of a pair of points, aggregated over symmetry
classes; Prellberg's linear-programming upper bound with capacity-two constraints on rows, columns and the two diagonal
families for a parity variant \cite{PrellbergCB} --- a relaxation, not an ensemble, and without the weights
$\binom{L}{x}$; Simkin's occupation of diagonals by direction for $n$-queens (capacity one), with an explicit independence
assumption across directions \cite{Simkin}; and a web-posted, unrefereed analysis of July 2026 \cite{Du} that measured
central directions of symmetric solutions against a random null model and reached the same qualitative conclusion
(short directions depleted).  In view of the last item the qualitative observation is not claimed as new here; what is
new is the quantitative spectrum over all pairs, its fifteen constants at matched $n$, and the model.
\emph{Limits of the search:} free-text web search was not available during it, and several primary sources were not
read (Ungar 1982, Jamison 1984--86, Kaplansky--Riordan 1946, Riordan 1958, Hall--Jackson--Sudbery--Wild 1975,
Flammenkamp's JCTA papers of 1992 and 1998, the restricted-occupancy papers of Freund), so the negative result is stated
with that caveat; the full record of channels, verdicts and confidence is public in the repository
(\texttt{docs/research/deep\_research\_14\_answer.md}).

\section{Across dimensions}\label{s:across}
Two of the parts ran the same first-moment test on the other dimension's data.  The results are placed here, where the
data live, and are recorded as indications only.

\subsection{A first-moment threshold in the cube, refuted by witnesses}\label{s:refuted}
Setting $\log\binom{n^3}{m}$ equal to the expected number of coplanar quadruples in a random
$m$-subset of $[n]^3$, computed \emph{as if the constraints were independent}, gives a threshold that
landed on exactly $2n+5$ for $n=5,\dots,12$.  Against the values then available its slack ran
$2,1,1,1,0,0$: monotonically to zero, which reads as a bound becoming tight.  It is not a bound.
The witness for $b(11)\ge28$ exceeds $2n+5=27$.

The failure has a mechanism.  The estimate treats the forbidden quadruples as independent; they
are positively correlated, since a set that has already avoided one plane avoids the next more
easily.  A first-moment threshold therefore \emph{understates}, and we verified the sign on the
companion estimate as well: the quadruple-independence threshold lies below the certified lower
bound at every $n$ from 5 to 12, with the deficit $-1,-2,-3,-3,-4,-4,-5,-4$: growing to $n=11$, receding by one at the last step.  What
looked like a bracket around the truth was one biased estimator evaluated at two coarsenings,
both leaning the same way.

We record the reading error separately, because it is not specific to this problem: a slack that
converges to zero does not distinguish a tight estimate from one about to be crossed, and the two
cannot be told apart from inside the model.

\subsection{The Guy--Kelly form of the estimate on the cube enumerations}\label{s:cubetest}
The same test as in Section~\ref{s:count} was carried out in the cube (no four coplanar) on the complete enumerations at
$n=3,4,5$ of Part I$'$ by the first solver.  There the increment of the error \emph{grows}: $+2.22$, then $+5.53$.
Two increments \emph{suggest} a wrong exponent --- and by our own standard in
Section~\ref{s:ratio}, where four exact values read as a plateau turned out to lie before the summit, two
increments decide nothing.  We record this as an indication only.  What it does mean is that the
conclusion of Section~\ref{s:class} is established in the plane and must not be carried into the cube
unexamined.  This is consistent with the
mechanism: in the plane the hard ceiling $2n$ binds and anchors the estimate, while in the cube the
corresponding bound $3n$ (Proposition~\ref{p:3n}) stands far from the achievable values and does not.

The two dimensions also share the instrument of Sections~\ref{s:lb3}, \ref{s:subspace} and \ref{s:strata4}: searching inside
symmetry strata.  In the plane, where complete material exists, the enrichment of maxima by symmetry is measured (factors of
$8$ to $73\,000$ at $n=6,\dots,11$); in the cube the same enrichment is used, and its price is recorded --- the cyclically
invariant subspace misses $b(6)$ by one point (Section~\ref{s:subspace}), and the gaps of $4$, $3$, $4$, $7$ to the 2016
contest records at $n=17,19,23,29$ measure what confinement to that subspace costs as $n$ grows (Section~\ref{s:prior3d}).

\section{Verification and reproducibility}\label{s:repro}
\emph{Witnesses.}  Every configuration reported as a lower bound in Parts I and I$'$ was checked by a program sharing no code
with the search that found it, and each checker is first given a deliberately corrupted witness that it must reject --- a check
that cannot fail confirms nothing.  For no three collinear the checker tests all $\binom{|S|}{3}$ triples by exact integer cross
products and checks that the points are distinct and inside the cube (\texttt{verify\_witness\_lines.py}).  For no four coplanar,
configurations were checked twice, by programs written independently and testing different statements: the first computes the
$3\times3$ determinant of every quadruple in exact integer arithmetic (\texttt{verify\_witness.py}); the second takes every
triple, forms the normal of the plane through it, and tests the scalar product against every remaining point, and it also checks
that the points are distinct and lie inside the cube, which the first test does not ask.  All runs reported zero violations, and
the thirteen configurations among the ancillary files were re-verified by the two scripts when this article was assembled.

\emph{Certificates and coverage.}  In Part I the single-call unsatisfiability answers carry DRAT certificates checked by
\texttt{drat-trim}; the split instances are regenerated deterministically on demand, and their case split, its completeness and
its aggregation are each verified separately, by an aggregator that builds the expected pieces from the manifest and refuses a
verdict on a missing or undecided piece (Section~\ref{s:exact3}).  The protocol \texttt{docs/VERIFICATION.md} of the repository
is written for a reader who trusts none of our code and gives the commands in order.

\emph{The plane.}  Every estimator run behind the tables of Part II is in \texttt{logs/kelly/} of the repository, including the
measurements that refuted our own earlier claims.  For Part II$'$ the measurement code, the model code with an executable
statement of Theorem~\ref{thm:gf} checked against brute force for $n\le5$, the tests written by a second agent, the two
adversarial reviews (one of which found the false claim ``the data are constant in $n$'' in an earlier draft and forced the
comparison at matched $n$), the dated ledger of predictions and the prior-art search of Section~\ref{s:prior2d} are public in
the repositories below (lemma 005 and need 008 of \texttt{lemma-atelier}).

\emph{Journals.}  Every number in this article traces to a file under \texttt{logs/} of the repository; by the rule of that
repository a number without such a reference counts as unverified, whoever wrote it.

\emph{Records.}  Programs, journals, witnesses, manifests and the verification protocol:
\url{https://github.com/iwasborninbali/saturation}; the papers, packages and witnesses are mirrored at
\url{https://github.com/iwasborninbali/no3-results}; the configurations of Part I with their per-class results are
archived in \cite{Witnesses}; the lemmas cited in Sections~\ref{s:lb3} and \ref{s:strata4} are in
\url{https://github.com/iwasborninbali/lemma-atelier}; the database of solutions is Flammenkamp's \cite{Flammenkamp}.
Earlier versions of the four parts were deposited as separate notes at Zenodo, and this article supersedes them:
\begin{itemize}\itemsep0pt
\item Part I, version 1.5: \texttt{doi:10.5281/zenodo.22273425};
\item Part I$'$, version 3.4: \texttt{doi:10.5281/zenodo.22272371};
\item Part II, version 1.9: \texttt{doi:10.5281/zenodo.22063379};
\item Part II$'$, version 1.1: \texttt{doi:10.5281/zenodo.22278951}.
\end{itemize}

\emph{Ancillary files} (directory \texttt{anc/} of the arXiv package): \texttt{verify\_witness\_lines.py} and the six witnesses
of Section~\ref{s:lb3} for $a(8)\ge94$ (two classes), $a(9)\ge116$ (two classes), $a(10)\ge138$ and $a(11)\ge164$;
\texttt{verify\_witness.py} and the seven configurations of Section~\ref{s:strata4}; and, for Part II$'$,
\texttt{direction\_spectrum\_model\_values.txt} (the model's constants at $n=20,30,40$ with the factors $\lambda_n$) and
\texttt{direction\_spectrum\_data\_by\_strata.txt} (the measured constants by $n$-stratum, with the solution counts).

\section{AI/tool-use disclosure}\label{s:disclosure}
The searches, the programs and the text of this article were produced by autonomous AI agents (Anthropic Claude) working under
the direction of the author of record, who posed the questions, chose what to compute, decided what to claim and takes
responsibility for the content.  In each part two agents worked with deliberately different program designs and audited each
other's conclusions rather than each other's code: one searched, the other verified without reading the searcher's code, so that
the verification would not inherit its blind spots; in Part II one agent worked in each dimension; in Part II$'$ one built the
model and the other measured the database with its own code and did not read the model before the blind test, and a third,
adversarial pass reviewed the claims before publication.  Several errors reported in the repository were caught that way and not
by their author, including one --- an unverified symmetry pruning --- that no certificate, witness check or coverage check would
have detected.

The work is arranged so that this should not matter.  Every claim above is meant to be checkable by a third
party without trusting the agents or the author: certificates by \texttt{drat-trim}, witnesses by brute-force
integer arithmetic, coverage by a mechanical aggregator that refuses a verdict on a missing or undecided piece,
the encoding by comparison against the direct collinearity criterion, and the model of Part II$'$ by its dated blind
predictions.  Where a claim is not checkable in that way, it is listed in the part's statement of what is not established.


\begin{thebibliography}{99}
\bibitem{GuyKelly} R.~K.~Guy and P.~A.~Kelly, The no-three-in-line problem, \emph{Canad.\ Math.\ Bull.} 11 (1968) 527--531.
\bibitem{Flammenkamp} A.~Flammenkamp, The no-three-in-line problem, \url{https://wwwhomes.uni-bielefeld.de/achim/no3in/readme.html}.
\bibitem{Density} A.~Flammenkamp, Frequency distributions of points of no-three-in-line configurations,
\url{https://wwwhomes.uni-bielefeld.de/achim/no3in/density.html} (1997--2026).
\bibitem{Survey} The general position problem: a survey, arXiv:2501.19385 (2025).
\bibitem{Prellberg} T.~Prellberg, Constraint satisfaction programming for the no-three-in-line problem, arXiv:2602.07751;
\emph{J.\ Combin.\ Theory Ser.~A}, to appear.
\bibitem{PrellbergCB} T.~Prellberg, No-three-in-line sets on the checkerboard grid, arXiv:2605.09215 (May 2026).
\bibitem{Voutier} P.~M.~Voutier, On the Guy--Kelly conjecture for the no-three-in-line problem, arXiv:2603.00215 (2026).
\bibitem{Kaplan} D.~Eppstein, Random no-three-in-line sets (summary of a talk by N.~Kaplan),
\url{https://11011110.github.io/blog/2018/11/10/random-no-three.html}.
\bibitem{Ghosal} A.~Ghosal, R.~Goenka, A.~Grebennikov, P.~Keevash, M.~Kwan and H.~T.~Pham, No-$(k+1)$-in-line problem for
$k\ge3$, arXiv:2607.05255 (July 2026).
\bibitem{Simkin} M.~Simkin, The number of $n$-queens configurations, \emph{Adv.\ Math.} 427 (2023) 109127.
\bibitem{Freund} J.~E.~Freund, Restricted occupancy theory --- a generalization of Pascal's triangle, \emph{Amer.\ Math.\
Monthly} 63 (1956) 20--27.
\bibitem{Du} J.~Du, no3inline-rigidity (GitHub repository aujurd22/no3inline-rigidity, July 2026), unrefereed.
\bibitem{PorWood} A.~P\'or and D.~R.~Wood, No-Three-in-Line-in-3D, \emph{Algorithmica} 47 (2007) 481--488;
preliminary version in Proc.\ Graph Drawing 2004, Lecture Notes in Comput.\ Sci.\ 3383, Springer.
\bibitem{Kissat} A.~Biere, K.~Fazekas, M.~Fleury and M.~Heisinger, CaDiCaL, Kissat, Paracooba, Plingeling and
Treengeling entering the SAT Competition 2020, Proc.\ SAT Competition 2020, University of Helsinki, 2020.
\bibitem{DratTrim} N.~Wetzler, M.~J.~H.~Heule and W.~A.~Hunt, Jr., DRAT-trim: efficient checking and trimming
using expressive clausal proofs, Proc.\ SAT 2014, Lecture Notes in Comput.\ Sci.\ 8561, Springer, 2014.
\bibitem{Glucose} G.~Audemard and L.~Simon, Predicting learnt clauses quality in modern SAT solvers,
Proc.\ IJCAI 2009.
\bibitem{OEIS} OEIS Foundation Inc., The On-Line Encyclopedia of Integer Sequences, \url{https://oeis.org}:
sequences A399138, A000755, A000769, A280538.
\bibitem{OEISA280537} OEIS Foundation Inc., Sequence A280537 (M.~Scheuermann, 2017), with the
January 2017 comment recording lower bounds up to its $a(17)\ge42$ (our $b(17)$) and the sourcing of its $a(7)$, $a(8)$
(our $b(7)$, $b(8)$); \url{https://oeis.org/A280537}.  See also A280538 (numbers of optimal solutions) and the
references there: E.~Pegg~Jr.\ (Wolfram Demonstrations; Mathematics Stack Exchange), T.~Sillke
(rec.puzzles, 1992), W.~M\"ohres (exhaustive search for $n=6$, private communication, 2016).
\bibitem{AZsPCs} Al Zimmermann's Programming Contests, \emph{Non-Coplanar Points}, March--June
2016; problem statement and final report at \url{http://azspcs.com/Contest/Tetrahedra}.  Scores
are squares of set sizes; per-size bests decoded accordingly (winner T.~Foden).
\bibitem{Witnesses} A.~Kudriashov, Witness configurations and lower bounds for the no-three-in-line problem in the
$n\times n\times n$ grid (OEIS A399138), Zenodo, 2026, \url{https://doi.org/10.5281/zenodo.22271375}; source
\url{https://github.com/iwasborninbali/a399138-witnesses}.
\end{thebibliography}
\end{document}